\documentclass[11pt]{amsart}
\usepackage{amsfonts,amssymb,amsmath,amsthm}
\usepackage{url,mathabx,color}
\usepackage{enumerate,comment}
\usepackage{mathtools}
\usepackage{enumerate}

\def\smallspace{\negthickspace}

\def\defmin{{\fontsize{2.5}{4}\selectfont def}}

\newcommand{\tmatrix}[9]
{\left(\begin{smallmatrix} 
#1 & #2 & #3\\ #4 & #5 & #6 \\ #7 & #8 & #9\end{smallmatrix} \right)}

\title{On the geometry of idempotents in von Neumann algebras}

\author{Thierry Giordano}

\address{Department of Mathematics and Statistics\\
University of Ottawa\\
585 King Edward Avenue\\
Ottawa, Ontario\\
KiN 6N5\\
Canada}

\email{giordano@uottawa.ca}
\author{Adam Sierakowski${}^1$}

\address{School of Mathematics and Applied Statistics \\
Building 39C\\
University of Wollongong\\
Wollongong NSW \\
2522\\
Australia}

\email{asierako@uow.edu.au}
\thanks{${}^1$Communicating author.}

\newcounter{theorem}
\newtheorem{thm}[theorem]{Theorem}
\newtheorem*{thmd*}{Theorem~D\textsuperscript{$\prime$}}
\newtheorem{lem}[theorem]{Lemma}
\newtheorem{prop}[theorem]{Proposition}

\newtheorem{question}[theorem]{Question}

\theoremstyle{remark}
\newtheorem*{remark*}{Remark}
\newtheorem{remark}[theorem]{Remark}
\newtheorem{remarks}[theorem]{Remarks}

\theoremstyle{definition}
\newtheorem{df}[theorem]{Definition}
\newtheorem{ntn}[theorem]{Notation}

\newcommand{\I}{\mathcal{I}}

\renewcommand*{\thetheorem}{\roman{theorem}}

\newcommand{\eqv}{\approx}

\newcommand{\Inv}{Inv}

\newcommand\mydef{\mathrel{\stackrel{\makebox[0pt]{\mbox{\normalfont\tiny\tiny \defmin}}}{=}}}

\def \bib(#1;#2;#3;#4;#5;#6) {{#1}, {\it #2} {#3},
{\bf#4} (#5) {#6}\par\smallskip}

\date{\today}
\subjclass[2010]{46L35, 46L05, 46L80}

\numberwithin{equation}{section}

\begin{document}

\maketitle

\begin{abstract}
We consider the general linear group as an invariant of von Neumann factors. We prove that up to complement, a set consisting of all idempotents generating the same right ideal admits a characterisation in terms of properties of the general linear group of a von Neumann factor. We prove that for two Neumann factors, any bijection of their general linear groups induces a bijection of their idempotents with the following additional property: If two idempotents or their two complements generate the same right ideal, then so does their image. This generalises work on regular rings, such include von Neumann factors of type $I_{n}$, $n < \infty$.
\end{abstract}

\renewcommand*{\thetheorem}{\Alph{theorem}}
\section*{Introduction}
This project is our first contribution in an ongoing classification of von Neumann factors. Here we consider the general linear group as an invariant. We study how the following geometry of idempotents can be characterised by the general linear group. Recall that an idempotent, also called a generalised projection, is an element satisfying ${e^2=e}$. Let $N$ be a von Neumann factor. For each idempotent $e\in N$ let $[e]$ denote the equivalence class of all idempotents $f$ satisfying $fN=eN$. We consider the following question: 

\begin{question}\label{qstA}
Is there a way to characterise the set $[e]$ of idempotents generating the same right ideal in terms of properties of the general linear group of $N$?
\end{question}

When considering the unitary group as an invariant Dye \cite{Dye} proved the following result back in 1954: Let $N$ and $M$ be two von Neumann factors not of type $I_{2n}$ and $\varphi$ a group isomorphism between their unitary groups, then there exist a linear or conjugate linear $^*$-isomorphism of $N$ and $M$ whose restriction to the unitary group  agrees up to character with $\varphi$.

Our results follows closely the work by Baer \cite{MR0052795} and Ehrlich \cite{MR0081885}. Bear considered the finite dimensional case and Ehrlich studied regular rings, these include precisely the von Neumann factors of finite dimension, this was pointed out by von Neumann in \cite{MR0120174}.

Let $N$ be a von Neumann algebra. Recall that there is a canonical bijection $\iota_N$, $e\mapsto 2e-1$ from the set of idempotents $I(N)$ in $N$ into the set of involutions $\Inv(N)$ in $N$. For each $e\in I(N)$ we let $\Delta^+(e)$ denote the image of $[e]$ via $\iota_N$ and $\Delta^-(e)$ the set $-\Delta^+(e)$. Two idempotents generate the same right ideal precisely when their associated $\Delta^+$-sets are equal. Question~\ref{qstA} is therefore equivalent to asking for a characterisation of a $\Delta^+$-set in term of properties of the general linear group $GL(N)$. Our main theorem states as follows:

\begin{thm}\label{thmB}
Let $N$ be a von Neumann factor. Let $\phi$ be a nonempty subset of $\Inv(N)$. Then $\phi$ is a $\Delta^+$-set or a $\Delta^-$-set if and only if $\phi$ is a maximal set among the nonempty subset of involutions in $N$ satisfying \eqref{delta.a}-\eqref{delta.d}:
\begin{enumerate}
\item\label{delta.a} If $u,v,w\in \phi$ then $uvw=wuv\in \phi$.
\item\label{delta.b} If $u,v\in \phi$ then there exist a unique $w\in \phi$ s.t. $wvw=u$.
\item\label{delta.c} If $u\in \Inv(N)$, then $u\phi=\phi u$ iff $uw=wu$ for some $w\in\phi$.
\item\label{delta.d} If $t\in \phi^2$ then $(t-1)^2=0$.
\end{enumerate}
\end{thm}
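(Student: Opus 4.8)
The plan is to treat the two implications separately, throughout passing between an involution $u$ and the idempotent $p_u=\tfrac{1+u}{2}$, so that $\phi$ is a $\Delta^+$-set exactly when the idempotents $p_u$ all generate one right ideal $R$. For the forward implication I would fix a reference idempotent $e$ in the class and use the earlier description that every $f$ with $fN=eN$ has the form $f=e+n$ with $n\in eN(1-e)$; correspondingly $\Delta^+(e)=\{(2e-1)+2n : n\in eN(1-e)\}$. The corner $eN(1-e)$ is a \emph{square-zero} bimodule (if $n,n'\in eN(1-e)$ then $nn'=en(1-e)en'(1-e)=0$), so the algebra collapses to the affine structure of the complex vector space $eN(1-e)$. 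A direct computation then gives \eqref{delta.a} (a product of three members lands back in the class, at parameter $n_u-n_v+n_w$, and satisfies the stated heap identity), \eqref{delta.b} (solving for $w$ amounts to halving in $eN(1-e)$, hence existence and uniqueness, using that $N$ is a complex algebra), and \eqref{delta.d} (a product of two members is $1+(\text{a corner element})$, whose square-zero property is precisely $(t-1)^2=0$). The $\Delta^-$ case then follows from the symmetry $u\mapsto -u$, which interchanges the two eigen-ideals and carries $\Delta^+(e)$ to $\Delta^-(e)$.

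Condition \eqref{delta.c} and maximality are the two structural points remaining in this direction. For \eqref{delta.c} I would show that $u\phi=\phi u$ (equivalently, conjugation by the involution $u$ preserves $\Delta^+(e)$) holds precisely when $u$ carries the common right ideal $R=eN$ onto itself, and that $u$ preserves $R$ iff $u$ commutes with \emph{some} $w\in\phi$. One direction is immediate; for the converse one manufactures a $u$-invariant complement to $R$ from the idempotents $\tfrac{1\pm u}{2}$ and reads off an idempotent of range $R$ fixed by $u$. For maximality, if $\psi\supseteq\phi$ also satisfies \eqref{delta.a}--\eqref{delta.d} and $u'\in\psi$, then testing \eqref{delta.d} on the products $u'v$ over all $v\in\phi$ forces $p_{u'}$ to have range $R$, whence $u'\in\phi$.

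The reverse implication carries the real work. Given a maximal $\phi$ satisfying \eqref{delta.a}--\eqref{delta.d}, choose $u_0\in\phi$, put $e=\tfrac{1+u_0}{2}$ and $n_u=p_u-e$. Conditions \eqref{delta.a} and \eqref{delta.b} make $\{p_u\}$ an affine family of idempotents (the induced heap operation is $p-q+r$) over a uniquely $2$-divisible additive set $\{n_u\}$, while \eqref{delta.d} yields both the idempotent relation $en_u+n_ue+n_u^2=n_u$ and the square-zero relation $n_u(2e-1)n_u=0$ (from $(u_0u-1)^2=4\,u_0n_uu_0n_u=0$). The crux is to upgrade this pointwise data to a genuine \emph{common} right ideal: to prove that all $n_u$ lie in a single corner, namely $eN(1-e)$ in the $\Delta^+$ case and $(1-e)Ne$ in the $\Delta^-$ case, so that $\phi\subseteq\Delta^+(e)$ or $\phi\subseteq\Delta^-(e)$. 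Once this holds, the forward direction shows the containing $\Delta^{\pm}$-set satisfies \eqref{delta.a}--\eqref{delta.d}, and maximality of $\phi$ forces equality.

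I expect this common-ideal step to be the main obstacle. A single pair is not enough: one can exhibit involutions with \emph{different} eigen-ideals whose product is nonetheless square-zero, so \eqref{delta.d} in isolation is too weak. The argument must exploit the \emph{collective} structure, namely closure under \eqref{delta.a}, the uniqueness in \eqref{delta.b}, and the normalizer condition \eqref{delta.c}, to rule out mixed configurations and to force the relevant corner to be \emph{full}; and it must use that $N$ is a factor to obtain the $+/-$ dichotomy and to exclude central obstructions. A further subtlety is that, since $N$ may be of type $\mathrm{II}$ or $\mathrm{III}$, no finite-dimensional eigenvalue argument is available, so every step has to be phrased through idempotents, their ranges, and the square-zero corners. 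This is where I would expect most of the proof to be spent.
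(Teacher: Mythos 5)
Your forward implication is essentially sound, and its maximality step takes a genuinely different route from the paper's. Properties \eqref{delta.a}, \eqref{delta.b}, \eqref{delta.d} follow from the corner computation exactly as you say (this is the paper's Lemma~\ref{lem2.5}), and your sketch of \eqref{delta.c} completes along the lines you indicate. Your maximality claim---testing \eqref{delta.d} on the products $u'v$ over $v\in\phi$---is correct but needs an argument you don't give: writing $v=2(e+n)-1$ with $n\in eN(1-e)$ and scaling $n\mapsto tn$, the vanishing of $(u'v-1)^2$ for all scalars $t$ separates into $(u'u-1)^2=0$ and $nu'n=0$ for every $n\in eN(1-e)$; after replacing $e$ by its range projection, taking $n=\bigl((1-e)u'e\bigr)^*$ forces $(1-e)u'e=0$, and then $(u'u-1)^2=0$ kills the remaining corners, so $u'\in\Delta^+(e)$ by Lemma~\ref{lem2.4}\eqref{max.3}. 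The paper instead deduces maximality from its main structural lemma (Lemma~\ref{lem2.9}); your pairwise-testing route, once fleshed out, is leaner.

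The reverse implication, however, is where the theorem lives, and your proposal does not prove it: you yourself flag the ``common ideal'' step as the expected main obstacle and offer only a description of what the argument ``must'' do. Concretely, the missing mechanism (the paper's Lemmas~\ref{lem2.6}--\ref{lem2.9}) is this. Set $A:=I^+(\phi^2)=\{x\in N: tx=x \ \text{for all}\ t\in\phi^2\}$, an annihilator set $\{x: bx=0 \ \text{for all}\ b\in B\}$ with $B=\{t-1: t\in\phi^2\}$. Then: (i) $A\neq\{0\}$, because for a nontrivial $t_0=1+n_0\in\phi^2$ and any $t=1+n\in\phi^2$, property \eqref{delta.a} gives $tt_0=t_0t\in\phi^2$, and expanding $(tt_0-1)^2=0$ with commuting square-zero parts yields $2nn_0=0$, hence $tn_0=n_0$, i.e.\ $n_0\in A$; (ii) $sA=A$ for every $s\in N(\phi)$, hence---by \eqref{delta.c}, which your sketch names but never deploys---for every involution commuting with a fixed $u\in\phi$; (iii) the operator-algebraic core: an annihilator set invariant under all involutions in $C(u)$ must be one of $\{0\}$, $I^+(u)$, $I^-(u)$, $N$. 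Step (iii) occupies three lemmas in the paper and uses range and support projections, their lattice operations, and partial isometries; it is exactly the replacement for the unavailable eigenvalue arguments you worry about. From $A=I^{\pm}(u)$ one gets $\phi\subseteq\Delta^{\pm}$ via \eqref{delta.b} and \eqref{delta.c}, and maximality forces equality. Nothing in your proposal substitutes for (i)--(iii). Two smaller slips: your assertion that \eqref{delta.a} and \eqref{delta.b} alone make $\{p_u\}$ an affine family with heap operation $p-q+r$ is unjustified (the identity $uvw=u-v+w$ is a consequence of the common ideal, not an input available at that stage), and the case $|\phi|=1$ needs separate treatment (a maximal singleton must be $\{1\}$ or $\{-1\}$, the degenerate $\Delta$-sets).
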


The final item \emph{(4)} is not stated as a property of $GL(N)$, but this can be done as follows: For each element $t\in N$ let $C(t)$ denote the centraliser of $t$ in $GL(N)$ and $C^2(t)$ the second centraliser of $t$ in $GL(N)$. We have

\begin{thm}\label{thmC}
Let $N$ be a von Neumann factor containing $1\neq s\in GL(N)$. Then $(s-1)^2=0$ if and only if the following \eqref{class.1C}-\eqref{class.4C} holds:
\begin{enumerate}
\item\label{class.1C} If $t\in GL(N)$, then $C(s)\subsetneq C(t)$ iff $GL(N)=C(t)$.
\item\label{class.2C} There exists $u\in Inv(N)$ such that $usu=s^{-1}$, and
\item\label{class.3C} an element $r\in C^2(u)$ such that $rsr^{-1}=s^2$.
\item\label{class.4C} $s^3\neq 1$.
\end{enumerate}
\end{thm}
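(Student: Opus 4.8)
The plan is to prove the two implications by different means: the forward implication ($(s-1)^2=0\Rightarrow$\eqref{class.1C}--\eqref{class.4C}) by exhibiting explicit witnesses built from the nilpotent $n:=s-1$, and the converse by a Peirce (two-by-two block) analysis along the involution supplied by \eqref{class.2C}.

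For the forward direction write $n=s-1$, so $n\neq0$ and $n^2=0$, and take the polar decomposition $n=v|n|$ with source projection $p=v^*v$ and range projection $q=vv^*$. The relation $n^2=0$ forces $pq=0$ (so $p\sim q$ are orthogonal) and yields $np=n=qn$, $pn=0=nq$. Conditions \eqref{class.2C}--\eqref{class.4C} then come out by direct computation: since $s^k=1+kn$ we get $s^3=1+3n\neq1$, giving \eqref{class.4C}; the self-adjoint involution $u:=1-2q$ satisfies $un=-n$ and $nu=n$, hence $usu=s^{-1}$, giving \eqref{class.2C}; and $r:=1+q$, which lies in $C^2(u)=GL(N)\cap(\CC q+\CC(1-q))$, satisfies $rnr^{-1}=2n$, i.e. $rsr^{-1}=s^2$, giving \eqref{class.3C}.

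The substantial point in the forward direction is \eqref{class.1C}, which I would reduce to the single algebraic identity $C^2(s)=GL(N)\cap(\CC 1+\CC n)$. Granting it, a non-scalar $t\in C^2(s)$ has the form $\alpha 1+\lambda n$ with $\lambda\neq0$, so $C(t)=C(n)=C(s)$; as $C(s)\subseteq C(t)$ holds exactly when $t\in C^2(s)$, the strict inclusion $C(s)\subsetneq C(t)$ can occur only for scalar $t$, that is only when $C(t)=GL(N)$, which is \eqref{class.1C}. To obtain the identity I would compute the relative commutant $\{n\}'\cap N$ in the $3\times3$ Peirce decomposition along $p,q,1-p-q$, then compute its commutant, using factoriality---triviality of the centres of the corners $pNp$, $qNq$ and $(1-p-q)N(1-p-q)$---to eliminate every off-diagonal block and all scalar freedom. \emph{The main obstacle I expect} is the case in which $|n|$ has non-trivial spectrum: the defining intertwiner $x_{qq}n=nx_{pp}$ of $\{n\}'$ then genuinely couples the two corners through $|n|$, and forcing the one surviving block to be a scalar multiple of $|n|$ (hence of $n$) requires a spectral-projection argument rather than a bare factoriality argument; the two-eigenvalue model already exhibits the mechanism.

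For the converse I claim that \eqref{class.2C}--\eqref{class.4C} alone suffice. First, \eqref{class.3C} forces $s^2\neq1$ (otherwise $rsr^{-1}=1$ would give $s=1$), so the involution $u$ of \eqref{class.2C} is $\neq\pm1$; setting $e_\pm=\tfrac12(1\pm u)$ gives complementary idempotents with $u=e_+-e_-$ and $C^2(u)=\{ae_++be_-:a,b\neq0\}$. Decomposing $s=(s_{\pm\pm})$ into Peirce blocks, $usu=s^{-1}$ and $rsr^{-1}=s^2$ (with $r=ae_++be_-$) translate into two families of block identities; adding the two $(+,+)$ relations yields $2s_{++}^2-s_{++}-e_+=0$, so $\sigma(s_{++})\subseteq\{1,-\tfrac12\}$, and similarly for $s_{--}$. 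The off-diagonal relations read $s_{++}s_{+-}=\tfrac{a}{2b}s_{+-}$ and $s_{-+}s_{++}=\tfrac{b}{2a}s_{-+}$. If both off-diagonal blocks were non-zero then $\tfrac{a}{2b},\tfrac{b}{2a}\in\sigma(s_{++})\subseteq\{1,-\tfrac12\}$, and since their product is $\tfrac14$ the only option is $\tfrac{a}{2b}=\tfrac{b}{2a}=-\tfrac12$, i.e. $r\in\CC^\times u$; but then $s^2=rsr^{-1}=usu=s^{-1}$, so $s^3=1$, contradicting \eqref{class.4C}. Hence an off-diagonal block vanishes, say $s_{-+}=0$, and the diagonal relations collapse to $s_{\pm\pm}^2=e_\pm=s_{\pm\pm}$, forcing $s_{++}=e_+$ and $s_{--}=e_-$. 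Thus $s-1=\minimatrix{0}{s_{+-}}{0}{0}$ in Peirce form and $(s-1)^2=0$. I would add the remark that \eqref{class.1C} is exactly what the forward direction extracts from $(s-1)^2=0$ and is not needed to drive this converse argument.
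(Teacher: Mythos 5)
Your proposal splits cleanly: the converse direction is complete and correct (and in fact stronger than the paper's), but the forward direction of \eqref{class.1C} has a genuine gap --- the very one you flagged.

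The gap: everything in the forward half of \eqref{class.1C} rests on the identity $C^2(s)=GL(N)\cap(\CC 1+\CC n)$, which you state as a target and do not prove; the rest of that paragraph is only the (easy) deduction from it. The obstacle you describe is real and is the analytic crux of the theorem: in $C(n)$ the $(q,q)$- and $(p,p)$-corners are tied by $x_{qq}n=nx_{pp}$, i.e.\ through $|n|$, and when $0$ is a non-isolated point of the spectrum of $|n|$ one needs a genuine spectral-projection argument (perturbations $1+EyF$ between spectral subspaces, rank-one-type intertwiners, etc.) to pin the surviving block inside $\CC n$; no soft factoriality argument does this, and the ``two-eigenvalue model'' is not the general case. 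The paper sidesteps the analysis by a different decomposition: Lemma~\ref{lem.vonN}\eqref{vonN3} posits $k$ with $e=nk$, $g=kn$ idempotent and $n=eng$, after which Lemmas~\ref{lem1.4} and~\ref{lem1.5} are purely algebraic $3\times 3$ Peirce computations along $e,g,1-e-g$ (the intertwining becomes the exact relation $gtg=k(ete)n$). Note, however, that this is not a free pass: the existence of such a $k$ says precisely that $n$ is von Neumann regular ($nkn=n$), equivalently that $n$ has closed range, which is automatic in finite type~$I$ factors but fails for suitable square-zero elements of any infinite-dimensional factor; so the difficulty you identified is not an artifact of your polar-decomposition route --- it is exactly what the paper's ``left to the reader'' Lemma~\ref{lem.vonN}\eqref{vonN3} hides. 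Separately, a small omission: \eqref{class.1C} is an ``iff'', and the direction $GL(N)=C(t)\Rightarrow C(s)\subsetneq C(t)$ needs $C(s)\neq GL(N)$, which your $r=1+q$ supplies since $rsr^{-1}=s^2\neq s$; say so explicitly. Your verification of \eqref{class.2C}--\eqref{class.4C} via $u=1-2q$, $r=1+q$ is correct and is the polar-decomposition analogue of the paper's Lemma~\ref{lem1.2}.

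Your converse, by contrast, is a correct and genuinely different argument. The paper sets $t=s+s^{-1}$, derives $(t-2)(t+1)=0$, and then must invoke \eqref{class.1C} together with Lemma~\ref{lem.vonN}\eqref{vonN7} to force $t=2$; your $2\times 2$ Peirce analysis along $u$ never uses \eqref{class.1C}, so you actually prove that \eqref{class.2C}--\eqref{class.4C} together with $s\neq 1$ already give $(s-1)^2=0$. I checked the computations: adding the $(+,+)$-components of $ss^{-1}=1$ (with $s^{-1}=usu$) and of $rsr^{-1}=s^2$ does give $2s_{++}^2-s_{++}-e_+=0$; the off-diagonal relations $s_{++}s_{+-}=\tfrac{a}{2b}s_{+-}$ and $s_{-+}s_{++}=\tfrac{b}{2a}s_{-+}$ follow by combining the two families; the zero-divisor argument placing $\tfrac{a}{2b},\tfrac{b}{2a}$ among the roots of $2x^2-x-1$ is sound; and the degenerate case $r\in\CC^{\times}u$ is correctly excluded by \eqref{class.4C}. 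The only external input is the description of $C^2(u)$, i.e.\ Lemma~\ref{lem.vonN}\eqref{vonN4} plus factoriality, which the paper needs as well. This half of your proposal both simplifies and strengthens the paper's argument; the other half is where all the remaining work lies.
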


Having established a characterisation of $\Delta^+$-sets (up to a sign) we apply this result to bijections between the general linear groups of von Neumann factors. For $e, f\in I(N)$, write $e\eqv f$ whenever $eN=fN$. This is an equivalence relation on $I(N)$. Let $[e]\mydef\{f \in I(N): f\eqv e\}$ denote the equivalence class containing $e$ and $I(N)/\smallspace\eqv\ \mydef \{[e] : e\in I(N)\}$ the set of equivalence classes.

\begin{thm}\label{thmD}
Let $N$ and $M$ be two von Neumann factors and let $\varphi$ a group isomorphism of their general linear groups. Let $\theta$ be the bijection of idempotents induced by $\varphi$, i.e., $\theta= \iota^{-1}_M\circ \varphi \circ \iota_N$. Then there exist a partitioning of the nontrivial elements of $I(N)/\smallspace\eqv$ into two set $\I_o,\I_{\bar{o}}$, such that
\begin{align*}
\tilde\theta ([e])=\left\{
\begin{array}{ll}
\theta ([e]),&\mbox{ if } e \in \I_o\\
\theta ([1-e]),&\mbox{ if } e \in \I_{\bar{o}}\\
{[1]},&\mbox{ if } e =1\\
{[0]},&\mbox{ if } e =0
\end{array}\right.
\end{align*}
is a bijection of $I(N)/\smallspace\eqv$ and $I(M)/\smallspace\eqv$.
\end{thm}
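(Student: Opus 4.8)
The plan is to read the partition directly off the way $\varphi$ permutes the family of $\Delta^+$-sets, transporting that family through the group isomorphism by means of Theorem~B and Theorem~C. I first record three elementary facts about $\varphi$. Since a group isomorphism preserves the orders of elements, $\varphi$ restricts to a bijection $\Inv(N)\to\Inv(M)$, which is exactly what makes $\theta=\iota_M^{-1}\circ\varphi\circ\iota_N$ a bijection $I(N)\to I(M)$. Next, the centre of $GL(N)$ for a factor is $\{\lambda 1:\lambda\neq0\}$, whose only involutions are $\pm1$, so $\varphi(-1)=-1$; hence $\varphi$ commutes with $u\mapsto-u$, and since $\iota_N(1-e)=-\iota_N(e)$ this yields the complement rule $\theta(1-e)=1-\theta(e)$. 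Finally I note the purely algebraic identity that complementation interchanges right- and left-ideal classes: for idempotents, $eN=fN$ if and only if $N(1-e)=N(1-f)$. These are the only hands-on inputs; everything else is organised around Theorem~B.

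For each idempotent $e$ the set $\Delta^+(e)=\iota_N([e])$ is by definition a $\Delta^+$-set, hence by Theorem~B a maximal subset of $\Inv(N)$ satisfying \eqref{delta.a}--\eqref{delta.d}. Conditions \eqref{delta.a}--\eqref{delta.c} are visibly group-theoretic, and Theorem~C rewrites \eqref{delta.d} group-theoretically as well; therefore ``being a maximal set satisfying \eqref{delta.a}--\eqref{delta.d}'' is preserved by $\varphi$. Applying Theorem~B in $M$, the image $\varphi(\Delta^+(e))$ is again a $\Delta^+$-set or a $\Delta^-$-set of $M$. For nontrivial $e$ these alternatives are mutually exclusive: if one set were both, a right-ideal class would coincide with a left-ideal class, forcing (from $fN=f'N$ together with $Nf=Nf'$, whence $f=f'$) that class to be a singleton, which never occurs for a nontrivial idempotent in a factor. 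This dichotomy defines the partition: put $[e]\in\I_o$ when $\varphi(\Delta^+(e))$ is a $\Delta^+$-set and $[e]\in\I_{\bar o}$ when it is a $\Delta^-$-set (the classes $[0],[1]$ are excluded and handled by hand, consistently with $\varphi(-1)=-1$). Since $\Delta^+(e)$ depends only on $[e]$, so does this assignment.

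The next step is to verify that the stated formula is well defined, and this is where the left/right bookkeeping must be done carefully -- I expect it to be the main obstacle. Write $\varphi(\Delta^+(e))=\Delta^{\pm}(g)$ according to the case. When $[e]\in\I_o$, $\theta$ carries the right-ideal class $[e]$ onto the right-ideal class $[g]=[\theta(e)]$, so $[\theta(e)]$ is representative-independent and we set $\tilde\theta([e])=[\theta(e)]$. When $[e]\in\I_{\bar o}$, $\theta$ carries $[e]$ onto a \emph{left}-ideal class, so $[\theta(e)]$ is \emph{not} representative-independent; the complement repairs this. Indeed, if $e\eqv e'$ then $N(1-e)=N(1-e')$, and because $\varphi(\Delta^+(e))$ is a $\Delta^-$-set one checks, using $\theta(1-\,\cdot\,)=1-\theta(\,\cdot\,)$, that $\theta$ sends the left-ideal class of $1-e$ onto the right-ideal class of $1-\theta(e)=\theta(1-e)$. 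Hence $\theta(1-e)\eqv\theta(1-e')$, so $[\theta(1-e)]$ is well defined, and we set $\tilde\theta([e])=[\theta(1-e)]$. In either case one computes $\tilde\theta([e])=[g]$.

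Finally I would establish bijectivity by exhibiting the inverse as the same construction applied to $\varphi^{-1}$. The isomorphism $\varphi^{-1}$ induces $\theta^{-1}$ and a partition $\mathcal J_o,\mathcal J_{\bar o}$ of the nontrivial classes of $M$, and the relation $\varphi(\Delta^+(e))=\Delta^{\pm}(g)$ reads, after applying $\varphi^{-1}$, as $\varphi^{-1}(\Delta^+(g))=\Delta^{\mp}(e)$; thus $[e]\in\I_o$ matches $[g]\in\mathcal J_o$ and $[e]\in\I_{\bar o}$ matches $[g]\in\mathcal J_{\bar o}$. A short computation with $\theta^{-1}$ and the complement rule then gives $\widetilde{\theta^{-1}}(\tilde\theta([e]))=[e]$ in both cases (in the reversing case one uses $\theta^{-1}(1-g)=1-\theta^{-1}(g)$ together with $g=1-\theta(e)$), and symmetrically on the other side; adjoining the fixed points $[0]\mapsto[0]$ and $[1]\mapsto[1]$ shows $\tilde\theta$ is a bijection. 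The only genuinely delicate points are the exclusivity of the $\Delta^+/\Delta^-$ dichotomy and the verification that, in the reversing case, complementation converts a left-ideal class back into a well-defined right-ideal class; the remainder is bookkeeping built on Theorems~B and~C.
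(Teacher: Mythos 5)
Your proposal is correct and, at its core, it is the same argument as the paper's own proof: use Theorem~\ref{thmB} (with Theorem~\ref{thmC} making condition \eqref{delta.d} group-theoretic) to see that $\varphi$ permutes $\Delta$-sets, define the partition $\I_o,\I_{\bar{o}}$ according to whether $\varphi(\Delta^+(e))$ is a $\Delta^+$- or a $\Delta^-$-set, verify well-definedness from the fact that $\Delta^+(e)$ depends only on $[e]$, and obtain bijectivity by running the same construction for $\varphi^{-1}$; this inverse is exactly the paper's map $\tilde\psi$, and your ``matching'' claim is the content of Lemma~\ref{lem3.4}\eqref{delta.plus}--\eqref{delta.minus}. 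Two of your sub-arguments differ from the paper's, both defensibly. First, you derive $\varphi(-1)=-1$ from the centre of $GL(N)$, whereas the paper cites \cite{MR3552377}; your version is self-contained. Second, you prove exclusivity of the $\Delta^+/\Delta^-$ dichotomy from the identity $eN=fN\Leftrightarrow N(1-e)=N(1-f)$: a set that is simultaneously a $\Delta^+$- and a $\Delta^-$-set corresponds to a class of idempotents that is simultaneously a right-ideal class and a left-ideal class, hence a singleton. The paper's Remarks~\ref{rem3.2}(ii) reaches the same singleton conclusion via the involution identities of Lemma~\ref{lem2.4}. Either way, one must still note explicitly that $|\Delta^+(e)|>1$ for nontrivial $e$ (Lemma~\ref{lem.vonN.2}\eqref{vonN.6} and Lemma~\ref{lem2.4}\eqref{max.3}) and that the bijection $\varphi$ preserves cardinality; you leave this half-implicit, so spell it out.

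One genuine error to fix, although it does not damage the architecture: in your final paragraph the asserted relation has the wrong sign. From $\varphi(\Delta^+(e))=\Delta^+(g)$ one gets $\varphi^{-1}(\Delta^+(g))=\Delta^+(e)$ directly, and from $\varphi(\Delta^+(e))=\Delta^-(g)$ one gets, using $\varphi^{-1}(-1)=-1$, that $\varphi^{-1}(\Delta^+(g))=-\varphi^{-1}(\Delta^-(g))=-\Delta^+(e)=\Delta^-(e)$. So the correct statement is $\varphi^{-1}(\Delta^+(g))=\Delta^{\pm}(e)$ with \emph{matching} signs, not $\Delta^{\mp}(e)$ as you wrote. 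Had you actually followed the $\mp$ version, the partitions would be matched crosswise and your composite $\widetilde{\theta^{-1}}\circ\tilde\theta$ would come out as complementation rather than the identity. Fortunately, the very next clause of your proof (``$[e]\in\I_o$ matches $[g]\in\mathcal{J}_o$ and $[e]\in\I_{\bar{o}}$ matches $[g]\in\mathcal{J}_{\bar{o}}$'') states the correct sign-matching relation, and that is what your inverse computation actually uses; only the displayed formula needs correcting.
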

It is possible that $\I_o$ or  $\I_{\bar{o}}$ is the empty set, consider for example the bijections $\varphi(u)=u^*$ or $\varphi(u)=u$.

\renewcommand*{\thetheorem}{\roman{theorem}}
\numberwithin{theorem}{section}
\section{Elements of class $2$ and the proof of Theorem~\ref{thmC}.}
In this section we give a characterisation of elements of class 2 (see Definition~\ref{def.classtwo}) in terms of properties of the general linear group. We start with a few definitions. Let $N$ be a von Neumann algebra. We let  $GL(N)$ denote the set of \emph{invertible} elements in $N$, $I(N)$ the set of \emph{idempotents} in $N$, $\Inv(N)$ the set of \emph{involutions} in $N$, and $Z(N)$ the \emph{center} of $N$, i.e.,
\begin{align*}
&GL(N)\mydef\{u\in N : uv=vu=1 \ \text{for some}\ v\in N \}, \ \  I(N)\mydef \{e\in N : e^2=e\}, \\
&\Inv(N)\mydef\{u\in N : u^2=1\}, \ \ \ Z(N)\mydef \{x\in N :  xy=yx \ \text{for all}\ y\in N\}.
\end{align*}
The inverse of $u\in GL(N)$ is unique and is denoted $u^{-1}$. Two idempotents are \emph{orthogonal} if they commute and their product is zero. For $u\in N$ set
\begin{align*}
&C(u)\mydef \{t \in GL(N) : ut=tu\},\\
&C^2(u)\mydef \{v \in GL(N) : tv=vt \ \text{for all}\ t\in C(u)\}.
\end{align*}

\begin{df}[cf.~\cite{MR0052795, MR0081885}]\label{def.classtwo}
Let $N$ be a von Neumann algebra. An element $t\in N$ is of \emph{class 1} if $t=1$ and of \emph{class 2} if $t\neq 1$ and $(t-1)^2=0$. 
\end{df}

We start with a few standard facts on von Neumann factors. Most of the properties are trivial and are included merely as a reference.

\begin{lem}\label{lem.vonN}
Let $N$ be a von Neumann factor. Then the following holds:
\begin{enumerate}
\item\label{vonN1} For each $x\in N$, $\frac{x}{2}, \frac{x}{3}\in N$ ($\frac{x}{2}$ is the unique $y\in N$ satisfying $2y=x$).
\item\label{vonN2} For each $0\neq n\in N$ such that $n^2=0$ there exists $e,f\in I(N)$ such that $n=enf$ and $fe=0$. 
\item\label{vonN3} For each $0\neq n\in N$, $n^2=0$ there exists idempotents $e,g\in I(N)$ and $k\in N$ such that $n=eng$, $ge=eg=0$, $e=nk$ and $kn=g$.
\item\label{vonN4} For each $u=2e-1\in \Inv(N)$ and $r\in C^2(u)$ there exists elements $z_1,z_2\in Z(N)$ such that $r=z_1e+z_2(1-e)$.
\item\label{vonN5} For each $e\in I(N)$ and $d_1\in eNe$ we have that if $d_1x=xd_1$ for all $x\in GL(eNe)$, then $d_1=ze$ for some $z\in Z(N)$.
\item\label{vonN6} For $e,g\in I(N)$ from \eqref{vonN3}, $f\mydef 1-e-g$ and $a_3\in eNf$, $a_4\in fNf$, $a_5\in fNg$ we have $a_3(fNg)=0 \Rightarrow a_3=0$, $a_4(fNg)=0 \Rightarrow a_4=0$ and $(eNf)a_5=0 \Rightarrow a_5=0$.
\item\label{vonN7} If $t\in N$, $C(t)=GL(N)$ and $(t-2)(t+1)=0$ then $t=2$ or $t=-1$.
\end{enumerate}
\end{lem}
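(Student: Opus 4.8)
The plan is to dispatch the seven items using four standard pieces of von Neumann algebra machinery: the complex $*$-algebra structure (for the scalar divisions), the polar decomposition together with support projections (for the square-zero statements), the fact that a von Neumann algebra is linearly spanned by its invertible elements (indeed by its unitaries), and the triviality of the center of a factor, refined through central supports. I would realize $N$ on a Hilbert space $H$ and arrange the logical order so that \eqref{vonN5} precedes \eqref{vonN4} and so that the construction in \eqref{vonN3} feeds \eqref{vonN6}. Items \eqref{vonN1} and \eqref{vonN7} are immediate: for \eqref{vonN1}, $\tfrac12 x,\tfrac13 x\in N$ since $N$ is a complex algebra and $2$ is invertible, giving uniqueness; for \eqref{vonN7}, the hypothesis $C(t)=GL(N)$ says $t$ commutes with every invertible, so by the spanning property $t\in Z(N)=\CC 1$, whence $t=\lambda 1$ and $(\lambda-2)(\lambda+1)=0$ forces $t\in\{2,-1\}$.

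For \eqref{vonN2} and \eqref{vonN3} I would take the polar decomposition $n=v|n|$ with $|n|=(n^*n)^{1/2}$, and set $e\mydef vv^*$ (range projection) and $g\mydef v^*v$ (source projection), both genuine projections. Then $en=n=ng$, so $n=eng$, and $n^2=0$ forces the range of $n$ into its kernel, i.e.\ $e\le 1-g$, giving $eg=ge=0$; this is \eqref{vonN2} with $f=g$. For \eqref{vonN3} it remains to produce $k$: since $g$ is the support projection of $|n|$, the element $|n|$ is invertible inside the von Neumann algebra $gNg$, with inverse $|n|^{-1}$, and setting $k\mydef|n|^{-1}v^*$ yields $nk=v|n|\,|n|^{-1}v^*=vgv^*=vv^*=e$ and $kn=|n|^{-1}v^*v|n|=|n|^{-1}g|n|=g$.

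For \eqref{vonN5} I note that the corner $eNe$ is a unital algebra each of whose elements is a sum of two invertibles (write $a=(a-\lambda e)+\lambda e$ for $|\lambda|$ large), so $d_1$ commuting with all of $GL(eNe)$ forces $d_1\in Z(eNe)=\CC e$, whence $d_1=\lambda e=ze$ with $z=\lambda 1\in Z(N)$. Item \eqref{vonN4} then follows quickly: since $u=2e-1$, commuting with $u$ is the same as commuting with $e$, so $C(u)=C(e)$ consists exactly of the invertibles block-diagonal for $1=e+(1-e)$; any $r\in C^2(u)$ commutes with $u\in C(u)$, hence with $e$, so $r=ere+(1-e)r(1-e)$. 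Testing $r$ against the block-diagonal invertibles $x+(1-e)$ with $x\in GL(eNe)$ shows $ere$ commutes with all of $GL(eNe)$, and symmetrically $(1-e)r(1-e)$ commutes with all of $GL((1-e)N(1-e))$; two applications of \eqref{vonN5} give $r=z_1e+z_2(1-e)$.

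The step I expect to require the most care is \eqref{vonN6}, the nondegeneracy of the corner multiplications. Here I would first collapse each implication to the form $aNg=0\Rightarrow a=0$ or its mirror $eNa=0\Rightarrow a=0$: since $a_3f=a_3$ one has $a_3(fNg)=a_3Ng$, likewise $a_4(fNg)=a_4Ng$, while $(eNf)a_5=eNa_5$. The decisive input is that in a factor the central support $c(g)$ of a nonzero projection equals $1$, together with the standard fact that the projection onto $\overline{NgH}$ is exactly $c(g)$. Thus $a_3Ng=0$ means $a_3$ annihilates $\overline{NgH}=c(g)H=H$, forcing $a_3=0$, and identically for $a_4$; for the last implication I pass to adjoints, turning $eNa_5=0$ into $a_5^*Ne=0$ and invoking $c(e)=1$. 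The only bookkeeping is that $e$ and $g$ are nonzero (guaranteed by $n\neq 0$) so their central supports are $1$, while any case with $f=0$ renders the relevant statement vacuous.
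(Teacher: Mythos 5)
The paper itself gives no proof of this lemma (it is ``left to the reader''), so the only question is whether your argument is sound. Items (1), (2), (4), (5) and (7) are essentially correct, and (6) is correct within your own framework: since your construction produces genuine projections $e=vv^*$, $g=v^*v$, the central-support argument ($c(g)=1$ in a factor, and $c(g)$ is the projection onto $\overline{NgH}$) applies verbatim. One small point deserves a word: in (5) the identity $Z(eNe)=\CC e$ is not immediate for a non-self-adjoint idempotent $e$, because $eNe$ is not $*$-closed; one should first conjugate $e$ to a projection $p$ (every idempotent in a von Neumann algebra is similar to a projection, cf.\ Lemma~\ref{lem.vonN.2}\eqref{vonN.8}), note that $pNp$ is again a factor, and transport the conclusion back through the similarity. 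With that, your reduction of (4) to (5) via the block-diagonal invertibles $x+(1-e)$ is fine.

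The genuine gap is in (3). The step ``since $g$ is the support projection of $|n|$, the element $|n|$ is invertible inside $gNg$'' is false: the support projection only records that $|n|$ is injective on $gH$, not that it is bounded below there. A positive injective compact operator on an infinite-dimensional Hilbert space has support projection $1$ and no inverse, so your $k\mydef|n|^{-1}v^*$ need not exist. Worse, the defect cannot be repaired by a smarter choice of $e,g,k$: the conclusion of (3) forces $nH$ to be closed, because $e=nk$ gives $eH\subseteq nH$, $n=eng$ gives $nH\subseteq eH$, and $eH=\ker(1-e)$ is closed for any bounded idempotent $e$. But square-zero elements with non-closed range exist in infinite-dimensional factors, e.g.\ $n=\minimatrix{0}{a}{0}{0}$ acting on $\ell^2\oplus\ell^2$, with $a$ positive, injective and compact; for such $n$ no triple $(e,g,k)$ as in (3) exists at all. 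So item (3) is provable exactly for square-zero $n$ with closed range --- equivalently $|n|$ bounded below on its support, in which case $|n|^{-1}\in gNg$ exists by functional calculus and your computation goes through --- and is false as stated otherwise. (This is automatic in the finite-dimensional, type $I_n$, regular-ring setting of Baer and Ehrlich, which is presumably why it could be taken for granted there.) In your write-up you should therefore either add the closed-range hypothesis to (3), or note explicitly that the statement in this generality cannot be proved; the same caveat then propagates to the places where the paper invokes (3) for an arbitrary class-2 element, beginning with Lemma~\ref{lem1.4}.
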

\begin{proof}
Left to the reader.
\end{proof}

We now present a few lemmata  constituting the proof of Theorem~\ref{thmC}. Notice, each class 2 element $s$ is invertible with inverse $2-s$, see Definition~\ref{def.classtwo}.

\begin{lem}\label{lem1.2}
Let $N$ be a von Neumann factor. Then for each $s\in N$ of class 2 there exists $u\in Inv(N)$ and $r\in C^2(u)$ such that $usu=s^{-1}$, $rsr^{-1}=s^2$ and $s^3\neq 1$.
\end{lem}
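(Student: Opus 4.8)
The plan is to linearise the whole problem through the nilpotent $n\mydef s-1$. Since $s$ is of class $2$ we have $n^2=0$ and $n\neq 0$, and $s$ is invertible with $s^{-1}=2-s=1-n$. Because $n^2=0$, a direct expansion gives $s^2=1+2n$ and $s^3=1+3n$, and more generally conjugating $s$ by any invertible $x$ yields $xsx^{-1}=1+xnx^{-1}$. Hence the three demands collapse to statements about $n$ alone: the relation $usu=s^{-1}$ is equivalent to $unu=-n$, the relation $rsr^{-1}=s^2$ is equivalent to $rnr^{-1}=2n$, and $s^3\neq 1$ is equivalent to $3n\neq 0$. The last of these is then immediate, since $3n=0$ would force $n=0$ by divisibility in a factor (Lemma~\ref{lem.vonN}\eqref{vonN1}), contradicting $n\neq 0$.

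Next I would extract an idempotent adapted to $n$. Applying Lemma~\ref{lem.vonN}\eqref{vonN2} to $n$ produces $e,f\in I(N)$ with $n=enf$ and $fe=0$; from $e^2=e$ and $fe=0$ one reads off the one-sided identities $en=n$ and $ne=0$. Set $u\mydef 2e-1\in\Inv(N)$. Then $un=(2e-1)n=n$ and $nu=n(2e-1)=-n$, so $unu=nu=-n$, and therefore $usu=1+unu=1-n=s^{-1}$. This settles the involution.

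For the remaining relation I would search for $r$ inside $C^2(u)$. Since $C(u)=C(e)$, Lemma~\ref{lem.vonN}\eqref{vonN4} shows that every element of $C^2(u)$ has the block form $z_1e+z_2(1-e)$ with $z_i\in Z(N)$; conversely any invertible element of this form commutes with everything commuting with $e$, so it lies in $C^2(u)$. Using $en=n$ and $ne=0$ one finds that such an $r$ scales $n$ by the central scalar $z_1z_2^{-1}$, i.e. $rnr^{-1}=z_1z_2^{-1}n$. Taking $z_1=2$, $z_2=1$, that is $r\mydef 1+e=2e+(1-e)$, gives an invertible element with $r^{-1}=1-\tfrac12 e$ (again by Lemma~\ref{lem.vonN}\eqref{vonN1}) lying in $C^2(u)$, and $rnr^{-1}=2n$, so $rsr^{-1}=1+2n=s^2$, as required.

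The main obstacle is the compatibility between the two constraints on $r$: it must simultaneously rescale $n$ by the factor $2$ and lie in the second centraliser $C^2(u)$. The shape forced by Lemma~\ref{lem.vonN}\eqref{vonN4} limits the candidates to the central combinations $z_1e+z_2(1-e)$, and it is precisely the one-sided identities $en=n$, $ne=0$ that make such a combination act on $n$ by the scalar $z_1z_2^{-1}$. Thus $u$ and $r$ must be built as commuting functions of a single idempotent $e$ tuned to $n$, and confirming that this can be arranged — rather than any individual computation — is the crux of the argument.
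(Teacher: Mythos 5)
Your proof is correct and follows essentially the same route as the paper's: both extract $e,f$ from Lemma~\ref{lem.vonN}\eqref{vonN2}, take $u=2e-1$ and $r=1+e$ with $r^{-1}=1-\tfrac{e}{2}$, and verify $r\in C^2(u)$ by noting that anything commuting with $u$ commutes with $e$ and hence with $r$. Your additional appeal to Lemma~\ref{lem.vonN}\eqref{vonN4} to motivate the form of $r$ is harmless but not needed for the verification itself.
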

\begin{proof}
Fix $s=n+1$ and select $e,f$ as in Lemma~\ref{lem.vonN}\eqref{vonN2}. Since $fe=0$ we get $f=f(1-e)$, so $n=enf=en(1-e)$. It follows that $u\mydef e-(1-e)$ satisfies $u\in Inv(N)$ and $usu=1+unu=1+uen(1-e)u=1-en(1-e)=s^{-1}$. Set $r\mydef1+e\in GL(N)$ with $r^{-1}=1-\frac{e}{2}$. Now, if $t\in C(u)$ then $tu=ut$, $te=et$ and $tr=rt$ so $r\in C^2(u)$. Moreover, $rsr^{-1}=1+ren(1-e)r^{-1}=1+2en(1-e)=1+2n=s^2$. Since $s^3=1+3n$, and $n\neq 0$ we get $s^3\neq 1$.
\end{proof}

\begin{remark}
Recall two idempotents $e$ and $g$ are \emph{orthogonal} if $eg=ge=0$. For two such idempotents $f\mydef1-e-g$ is also an idempotent and one can write each element $t\in N$ as the sum $t=ete+etg+etf+get+gtg+gtf+fte+ftg+ftf$. Notice that $ete\in eNe$ and so on. This may be recorded as
\begin{align*}
t=\tmatrix{ete}{etg}{etf}{gte}{gtg}{gtf}{fte}{ftg}{ftf},
\end{align*}
with respect to the ordering $e,g,f$. Since the idempotents are pairwise orthogonal one can perform classical matrix multiplication to get products.
\end{remark}

\begin{lem}\label{lem1.4}
Let $N$ be a von Neumann factor. Then for each $s=1+n$ of class 2 and $t\in GL(N)$ we have that $t\in C(s)$ iff there exists pairwise orthogonal idempotents $e,g,f\in I(N)$ and elements $k, t_1,\dots, t_5 \in N$ such that $e+g+f=1$, $e=nk$, $g=kn$, $n=eng$ and
\begin{align}\label{matrixthree}
t=\tmatrix{t_1}{t_2}{t_3}{0}{kt_1n}{0}{0}{t_5}{t_4},
\end{align}
with respect to the order $e,g,f$ (so $t_2\in eNg$ and so on). \end{lem}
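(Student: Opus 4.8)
The plan is to reduce the whole statement to the single commutation relation $tn=nt$ and then read it off entrywise in the $3\times 3$ matrix calculus of the preceding Remark. First I would note that, since $s=1+n$, for any $t\in GL(N)$ one has $t\in C(s)$ if and only if $tn=nt$. Because $n\neq 0$ and $n^2=0$, Lemma~\ref{lem.vonN}\eqref{vonN3} supplies orthogonal idempotents $e,g$ and an element $k$ with $n=eng$, $e=nk$, $kn=g$; putting $f\mydef 1-e-g$ gives pairwise orthogonal idempotents summing to $1$, and after replacing $k$ by $gke$ (one checks $n(gke)=e$ and $(gke)n=g$) I may assume $k\in gNe$, so that the candidate entry $kt_1n$ genuinely lands in $gNg$. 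These are exactly the data and relations required on the right-hand side, so for both implications I fix this choice of $e,g,f,k$.

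Next I would pass to matrices. Writing $t=(t_{ij})$ with $t_{ij}=p_itp_j$ for $(p_1,p_2,p_3)=(e,g,f)$, the relation $n=eng$ says that the only nonzero entry of $n$ is its $(1,2)$-entry, equal to $n$. Multiplying out, $tn$ is supported on its second column with $(tn)_{i2}=t_{i1}n$, while $nt$ is supported on its first row with $(nt)_{1j}=nt_{2j}$. Equating $tn=nt$ entry by entry therefore collapses to the five relations
\begin{align*}
n t_{21}=0,\quad n t_{23}=0,\quad t_{21}n=0,\quad t_{31}n=0,\quad t_{11}n=n t_{22}.
\end{align*}

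The heart of the argument is to convert these into vanishing and normal-form statements by using $k$ as a one-sided inverse of $n$. Since $t_{21},t_{23}\in gN$ and $g=kn$, I get $t_{21}=kn\,t_{21}=k(nt_{21})=0$ and likewise $t_{23}=0$; since $t_{31}\in Ne$ and $e=nk$, I get $t_{31}=t_{31}nk=(t_{31}n)k=0$; and from $t_{22}=kn\,t_{22}=k(nt_{22})=k(t_{11}n)=kt_{11}n$. Hence $tn=nt$ forces precisely the shape \eqref{matrixthree} with $t_1=t_{11}$, $t_2=t_{12}$, $t_3=t_{13}$, $t_5=t_{32}$, $t_4=t_{33}$. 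Conversely, if $t$ already has the form \eqref{matrixthree}, a direct multiplication using $nk=e$ (so $nkt_1n=et_1n=t_1n$) shows both $tn$ and $nt$ equal the matrix whose only nonzero entry is $t_1n$ in position $(1,2)$; thus $tn=nt$ and, $t$ being invertible, $t\in C(s)$.

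The only genuinely non-formal point is this cancellation: in an arbitrary ring $nt_{21}=0$ need not give $t_{21}=0$, and it is exactly the partial inverse $k$ of Lemma~\ref{lem.vonN}\eqref{vonN3} — encoding that $n$ restricts to an isomorphism between the $g$-corner and the $e$-corner — that makes each of the five scalar relations equivalent to the corresponding entry of \eqref{matrixthree}. Once $k$ is in hand the rest is bookkeeping in the $3\times 3$ calculus, and I would only be careful to verify that each $t_{ij}$ lies in the correct corner $p_iNp_j$ so that the identifications with $t_1,\dots,t_5$ and with $kt_1n\in gNg$ are legitimate.
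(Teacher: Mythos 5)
Your proof is correct and takes essentially the same route as the paper's: both reduce $t\in C(s)$ to the single relation $tn=nt$, invoke Lemma~\ref{lem.vonN}\eqref{vonN3} to get $e,g,k$ with $f=1-e-g$, and then use $e=nk$, $g=kn$ as a partial inverse to kill the corners $gte$, $fte$, $gtf$ and to identify $gtg=k(ete)n$, with the identical matrix computation ($nkt_1n=et_1n=t_1n$) for the converse. Your normalization $k\mapsto gke$, placing $k\in gNe$ so that $kt_1n$ visibly lies in $gNg$, is a small tidying step the paper omits, but it does not alter the argument.
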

\begin{proof}
Fix $s=1+n$ of class 2 and $t\in GL(N)$. By assumption on $n$ there exist $e,g,k$ as in Lemma~\ref{lem.vonN}\eqref{vonN3}. Define $f\mydef1-e-g$. 

Suppose $t\in C(s)$. Then $ts=st$ and $tn=nt$. Using $gn=gen=0$ we get $gte=gtnk=gntk=0$. Similarly $fte=ftnk=fntk=0$ and $gtf=kntf=ktnf=0$. Finally, using $en=n=ng$ we get the desired form of $t$ via $k(ete)n=ket(en)g=ketng=kentg=kntg=gtg$.

Conversely, for any $t\in GL(N)$ of the form as in \eqref{matrixthree} with $e=nk$, $g=kn$ and $n=eng$. We get $st=ts$ from
\begin{align*}
n=\tmatrix{0}{eng}{0}{0}{0}{0}{0}{0}{0}, \ \ nt=\tmatrix{0}{nkt_1n}{0}{0}{0}{0}{0}{0}{0}, \ \ tn=\tmatrix{0}{t_1n}{0}{0}{0}{0}{0}{0}{0},  \ \ nkt_1n=et_1n=t_1n.\ \ 
\end{align*}
\end{proof}

\begin{lem}\label{lem1.5}
Let $N$ be a von Neumann factor. Then for each $s=1+n$ of class 2 and $d\in C^2(s)$ there exists elements $z_1, z_2\in Z(N)$ such that $$d=z_11+z_2n.$$
\end{lem}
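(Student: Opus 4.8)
The plan is to exploit the matrix description of $C(s)$ from Lemma~\ref{lem1.4}. Since $s\in C(s)$ and $s$ is invertible, any $d\in C^2(s)$ must commute with $s$, so $d\in C(s)\cap GL(N)$ and Lemma~\ref{lem1.4} applies: writing $e,g,k$ as in Lemma~\ref{lem.vonN}\eqref{vonN3} and $f\mydef 1-e-g$, we obtain
\begin{align*}
d=\tmatrix{d_1}{d_2}{d_3}{0}{kd_1n}{0}{0}{d_5}{d_4}
\end{align*}
with respect to the order $e,g,f$. The strategy is then to feed suitably chosen elements of $C(s)$ into the relation $dt=td$ and read off the blocks, repeatedly using $nk=e$, $kn=g$, $en=ng=n$ and $gk=(kn)k=ke$.

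First I would probe with diagonal elements. Taking $t\mydef 1+f\in C(s)$ forces $df=fd$, which immediately gives $d_3=0$ and $d_5=0$. Next, taking $t\mydef e+g+t_4$ with $t_4\in GL(fNf)$ (these lie in $C(s)$ since their middle block is $ken=g$), the $(3,3)$-entry of $dt=td$ shows that $d_4$ commutes with every $t_4\in GL(fNf)$; by Lemma~\ref{lem.vonN}\eqref{vonN5} this yields $d_4=z_4f$ for some $z_4\in Z(N)$. Symmetrically, taking $t\mydef t_1+kt_1n+f$ with $t_1\in GL(eNe)$, the $(1,1)$-entry gives that $d_1$ commutes with $GL(eNe)$, so $d_1=z_1e$ with $z_1\in Z(N)$, whence the middle block becomes $kd_1n=z_1g$.

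The off-diagonal block $d_2$ is where the relations do real work. From the $(1,2)$-entry of $dt=td$ for the same $t=t_1+kt_1n+f$ one obtains $d_2kt_1n=t_1d_2$. Setting $a\mydef d_2k\in eNe$ and using $d_2=d_2g=(d_2k)n=an$ together with $nk=e$ reduces this, after right-multiplication by $k$, to $at_1=t_1a$ for all $t_1\in GL(eNe)$; Lemma~\ref{lem.vonN}\eqref{vonN5} then gives $a=z_2e$, hence $d_2=z_2n$. Finally, to force the scalar on the $f$-block to agree with $z_1$, I would probe with $t\mydef 1+t_5$, $t_5\in fNg$ (again in $C(s)$, with $t_5^2=0$), whose $(3,2)$-entry yields $(z_1-z_4)\,t_5=0$ for every $t_5\in fNg$; applying Lemma~\ref{lem.vonN}\eqref{vonN6} to $a_4\mydef(z_1-z_4)f$ gives $z_4f=z_1f$. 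Assembling the blocks then produces $d=z_11+z_2n$.

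The main obstacle I anticipate is the bookkeeping for $d_2$: one must recognise that $d_2$ is forced to be a central multiple of $n$ rather than an arbitrary element of $eNg$, and this only emerges after rewriting $d_2=an$ and cancelling $n$ on the right via $nk=e$. The other delicate point is that the diagonal probes alone only show each block is a central multiple of the corresponding idempotent; matching these scalars across blocks requires the genuinely off-diagonal test element $1+t_5$ together with the annihilation statement in Lemma~\ref{lem.vonN}\eqref{vonN6} to tie the $f$-corner back to the $e,g$-corner.
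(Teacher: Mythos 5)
Your proof is correct and takes essentially the same route as the paper's: both write $d$ in the $e,g,f$ matrix form supplied by Lemma~\ref{lem1.4}, commute it against invertible test elements of $C(s)$, and conclude with Lemma~\ref{lem.vonN}\eqref{vonN5} and Lemma~\ref{lem.vonN}\eqref{vonN6}. The only deviations are in the choice of probes --- your $1+f$, $e+g+t_4$, $1+t_5$ together with the final scalar-matching step, versus the paper's $t_2$, $t_3$ and its auxiliary element $r=n+k+f$ for handling the $d_2$ block --- which does not change the substance of the argument.
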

\begin{proof}
Fix $s=1+n$ of class 2 and $d\in C^2(s)$. Since $d\in C^2(s)\subseteq C(s)$, there exists $e,g,f\in I(N)$ and elements $k, d_1\,\dots,d_5\in N$ representing $d$ as in \eqref{matrixthree}. Using Lemma~\ref{lem1.4} on the following $t_1, t_2, t_3\in GL(N)$,
\begin{align*}
t_1\mydef\tmatrix{x}{0}{0}{0}{kxn}{0}{0}{0}{f}, \ t_2\mydef\tmatrix{e}{0}{0}{0}{g}{0}{0}{y}{f}, \ t_3\mydef\tmatrix{e}{0}{z}{0}{g}{0}{0}{0}{f}, \ d=\tmatrix{d_1}{d_2}{d_3}{0}{kd_1n}{0}{0}{d_5}{d_4},
\end{align*}
with (any) $x\in GL(eNe)$, $y\in fNg$, $z\in eNf$ we have that each $t_i\in C(s)$.  Since $d\in C^2(s)$, $d$ commutes with each $t_i\in C(s)$. In particular, $dt_1=t_1d$, so $d_1x=xd_1$ in $eNe$ for all $x\in GL(eNe)$. Using Lemma~\ref{lem.vonN}\eqref{vonN5} we know $d_1=z_1e$ for some $z_1\in Z(N)$. Now, since $kd_1n=k(z_1e)n=z_1kn=z_1g$,
\begin{align*}
d=\tmatrix{d_1}{d_2}{d_3}{0}{kd_1n}{0}{0}{d_5}{d_4}=\tmatrix{z_1e}{d_2}{d_3}{0}{z_1g}{0}{0}{d_5}{d_4}=z_11+d'  \ \ \text{ for } \ \ d'\mydef\tmatrix{0}{d_2}{d_3}{0}{0}{0}{0}{d_5}{d_4-z_1f}.
\end{align*}
Successively using $d't_2=t_2d'$ and $d't_3=t_3d'$ we get $d_3=0$, $d_4-z_1f=0$ and $d_5=0$ (by Lemma~\ref{lem.vonN}\eqref{vonN6}).  For $r\mydef n+k+f$ the equality $d't_1r=t_1d'r$ reduces to $x(d_2k)=(d_2k)x$ in $eNe$. By Lemma~\ref{lem.vonN}\eqref{vonN5}, $d_2k=z_2e$ for some $z_2\in Z(N)$. Consequently $d'=d_2=d_2g=d_2kn=z_2en=z_2n$, so $d=z_11+z_2n$.
\end{proof}

\begin{prop}\label{lem1.6}
Let $N$ be a von Neumann factor. Then for each $s\in N$, the element $s$ is of class 2 if and only if \eqref{class.1}-\eqref{class.4} is satisfied:
\begin{enumerate}
\item\label{class.1} If $t\in GL(N)$, then $C(s)\subsetneq C(t) \Leftrightarrow GL(N)=C(t)$.
\item\label{class.2} There exists $u\in Inv(N)$ such that $usu=s^{-1}$, and
\item\label{class.3} an elements $r\in C^2(u)$ such that $rsr^{-1}=s^2$.
\item\label{class.4} $s^3\neq 1$.
\end{enumerate}
\end{prop}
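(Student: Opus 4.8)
The plan is to prove the two implications separately, leveraging the earlier lemmata for the forward direction and working directly with the structure theory for the reverse.

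For the forward direction, assume $s$ is of class 2, so $s = 1+n$ with $n \neq 0$ and $n^2 = 0$. Conditions \eqref{class.2}, \eqref{class.3} and \eqref{class.4} are furnished immediately by Lemma~\ref{lem1.2}, which constructs exactly such a $u \in \Inv(N)$ and $r \in C^2(u)$. Only \eqref{class.1} requires work. The key observation is that $C(t) = GL(N)$ precisely when $t \in Z(N) \cap GL(N)$, i.e.\ $t$ is a central invertible element. So \eqref{class.1} says: any $t$ whose centraliser strictly contains $C(s)$ must be central. To establish this, I would suppose $C(s) \subsetneq C(t)$ and aim to show $t$ is central. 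Since $t$ commutes with everything commuting with $s$, in particular $t \in C^2(s)$, so by Lemma~\ref{lem1.5} we may write $t = z_1 1 + z_2 n$ with $z_i \in Z(N)$. If $z_2 \neq 0$ then $t$ and $s$ generate the same centraliser (both equal $C(n) = C(s)$), contradicting the strict inclusion; hence $z_2 = 0$ and $t = z_1 1$ is central, giving $C(t) = GL(N)$. The converse inside \eqref{class.1} is trivial: if $C(t) = GL(N)$ then $C(t)$ contains $C(s)$, and the inclusion is strict because $s \notin Z(N)$ (as $n \neq 0$) forces $C(s) \neq GL(N)$.

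For the reverse direction, assume \eqref{class.1}--\eqref{class.4} hold; I must deduce $s$ is of class 2. From \eqref{class.2} we have $usu = s^{-1}$ with $u$ an involution, so $s$ is conjugate to its inverse via an involution. Writing $u = 2e-1$ for an idempotent $e$ and decomposing $s$ against the splitting induced by $e$, the relation $usu = s^{-1}$ constrains the off-diagonal and diagonal blocks of $s$. The strategy is to combine this with \eqref{class.3}, the relation $rsr^{-1} = s^2$ with $r \in C^2(u)$: by Lemma~\ref{lem.vonN}\eqref{vonN4}, $r = z_1 e + z_2(1-e)$ for central $z_1, z_2$, which pins down how conjugation by $r$ acts block-wise and forces a rigid relationship between $s$ and $s^2$. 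The aim is to show these two relations together imply $s$ satisfies a low-degree polynomial, specifically that $(s-2)(s+1)$ or $(s-1)^2$ annihilates, and then to use \eqref{class.1} to eliminate the degenerate central possibilities.

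The main obstacle I anticipate is the reverse direction: extracting $(s-1)^2 = 0$ from the conjugacy relations $usu = s^{-1}$ and $rsr^{-1} = s^2$ together with the centraliser condition. The natural approach is spectral or polynomial: the relation $rsr^{-1} = s^2$ says $s$ and $s^2$ are conjugate, so they have the same spectrum, forcing $\sigma(s) \subseteq \{\lambda : \lambda^2 \in \sigma(s)\}$, while $usu = s^{-1}$ forces $\sigma(s)$ to be inverse-symmetric; combined with $s^3 \neq 1$ from \eqref{class.4} these should confine the spectrum to $\{1\}$ (ruling out roots of unity and the fixed points $s=1$ trivially). The delicate point is that in a general von Neumann factor one cannot simply read off $(s-1)^2 = 0$ from $\sigma(s) = \{1\}$; one must rule out a nontrivial semisimple part, and this is exactly where \eqref{class.1} enters—via Lemma~\ref{lem.vonN}\eqref{vonN7}, the condition $C(t) = GL(N)$ combined with a quadratic relation $(t-2)(t+1)=0$ forces $t \in \{2,-1\}$, which should let me conclude that the relevant block of $s$ collapses and that $s-1$ is nilpotent of order two rather than merely quasinilpotent.
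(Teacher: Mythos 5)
Your forward direction is correct and coincides with the paper's own argument: Lemma~\ref{lem1.2} supplies \eqref{class.2}--\eqref{class.4}, and for \eqref{class.1} the inclusion $C(s)\subseteq C(t)$ forces $t\in C^2(s)$, so Lemma~\ref{lem1.5} gives $t=z_11+z_2n$, and a nonzero $z_2n$ would force $C(t)=C(s)$, contradicting strictness. (Your strictness argument for the other implication, that $s\notin Z(N)$ forces $C(s)\neq GL(N)$, is fine in a von Neumann algebra because invertibles span $N$; the paper instead exhibits the explicit witness $r\notin C(n)$.)

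The reverse direction, however, has a genuine gap: you never produce the element to which Lemma~\ref{lem.vonN}\eqref{vonN7} is to be applied, and the spectral fallback you propose cannot close that hole. The paper's key device, absent from your sketch, is the auxiliary element $t= s+s^{-1}$. By \eqref{class.2}, $utu=us u+us^{-1}u=s^{-1}+s=t$, so $t$ commutes with $u$, hence with $e=\frac{u+1}{2}$, hence with $r=z_1e+z_2(1-e)$ by Lemma~\ref{lem.vonN}\eqref{vonN4}; then \eqref{class.3} gives $t=rtr^{-1}=s^2+s^{-2}=t^2-2$, which is exactly the quadratic $(t-2)(t+1)=0$, and $t(t-1)=2$ shows $t\in GL(N)$. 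One then checks $C(s)\subseteq C(t)$ and that $u$ witnesses strictness (if $u\in C(s)$ then $s=s^{-1}$, so $rsr^{-1}=s^2=1$ forces $s=1$, and then \eqref{class.1} applied to $t=s$ is self-contradictory), so \eqref{class.1} yields $GL(N)=C(t)$, Lemma~\ref{lem.vonN}\eqref{vonN7} yields $t\in\{2,-1\}$, \eqref{class.4} excludes $t=-1$, and $t=2$ gives $(s-1)^2=s(s+s^{-1}-2)=s(t-2)=0$ with $s\neq 1$. Your sketch instead hopes that $s$ itself satisfies a low-degree polynomial (it is $t=s+s^{-1}$, not $s$, that satisfies the quadratic) and otherwise retreats to spectral reasoning, but that route fails on its own terms: the constraints $\sigma(s)=\sigma(s)^2$ and $\sigma(s)=\sigma(s)^{-1}$ are both satisfied by the full unit circle, so they cannot confine $\sigma(s)$ to $\{1\}$, and \eqref{class.4} is an operator identity rather than a spectral restriction. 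Finally, you never verify the hypothesis $C(s)\subsetneq C(t)$ that must hold before \eqref{class.1} can be invoked at all; establishing it (via $u$, as above) is a substantive step that itself uses \eqref{class.1}--\eqref{class.3} in concert.
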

\begin{proof}
Fix $s=1+n\in GL(N)$. Suppose $s$ is of class 2. Then \eqref{class.2}-\eqref{class.4} holds by Lemma~\ref{lem1.2}. To show \eqref{class.1} fix any $t\in GL(N)$.

``$\Leftarrow$'': Since $C(t)=GL(N)$ it suffices to show $C(s)\neq C(t)$. Assuming $C(s)= C(t)$ we have $GL(N)=C(t)=C(s)=C(n)$, but $r\not\in C(n)$ because $rnr^{-1}=s^2-1=(1+n)^2-1=2n\neq n$.

``$\Rightarrow$'' Since $C(s)\subseteq C(t)$, we get $t\in C^2(s)= \{t' : xt'=t'x \ \text{for all}\ x\in C(s)\}$. By Lemma~\ref{lem1.5}, $t=z_11+z_2n$ for some $z_1, z_2\in Z(N)$. Assume $z_2n\neq 0$. Then $x\in GL(N)$ commutes with $t$ iff it commutes with $s=1+n$. This implies $C(s)=C(t)$, giving a contradiction. We deduce that $z_2n=0$, so $t=z_11\in Z(N)$ and $GL(N)=C(t)$.

Suppose $s=1+n\in GL(N)$ satisfy \eqref{class.1}-\eqref{class.4}. Set $t=s+s^{-1}$. By \eqref{class.2} we get $u\in Inv(N)$ such that $utu=t$. By \eqref{class.3}, $r\in C^2(u)$. Hence property Lemma~\ref{lem.vonN}\eqref{vonN4} provides elements $z_1,z_2\in Z(N)$ such that $r=z_1e+z_2(1-e)$ for $e\mydef\frac{u+1}{2}$. We know that $t$ commutes with $u$, $e$, and $r$. It follows (by \eqref{class.3}) that $t=rtr^{-1}=r(s+s^{-1})r^{-1}=s^2+s^{-2}=(s+s^{-1})^2-2=t^2-2$, so $t(t-1)=2$ implying $t\in GL(N)$.

We now show $C(s)\subsetneq C(t)$: ``$C(s)\subseteq C(t)$'': If $x\in C(s)$ then $s^{-1}(xs)s^{-1}=s^{-1}(sx)s^{-1}$ so $x\in C(s^{-1})$ and $x\in C(t)$.
``$C(s)\neq C(t)$'': We know $u\in C(t)$. Assuming $u\in C(s)$, we get $s=s^{-1}$ by \eqref{class.2}. Using \eqref{class.3}, $1=s^2=rsr^{-1}$, so $s=1$. Hence $GL(N)=C(s)$ and by \eqref{class.1}, $C(s)\subsetneq C(s)$ (false). So $u\not\in C(s)$. 

Knowing $C(s)\subsetneq C(t)$ we get $GL(N)=C(t)$ by \eqref{class.1}. Since we already established $(t-2)(t+1)=0$, we get $t=2$ or $t=-1$ by Lemma~\ref{lem.vonN}\eqref{vonN7}. If $t=-1$ then $-1=s+s^{-1}$, $s^3=(1+s^{-1})s(1+s^{-1})=(s+1)(1+s^{-1})=2+t=1$, contradicting \eqref{class.4}. Therefore $t=2$. Hence $s\neq 1$ and $(s-1)^2=s^2+1-2s=s(s+s^{-1}-2)=s(t-2)=0$. So $s$ is of class 2.
\end{proof}

\smallskip
\emph{Proof of Theorem~\ref{thmC}:} Since $1\neq s\in GL(N)$, $s$ is of class 2 iff $(s-1)^2=0$. The result now follows from Proposition~\ref{lem1.6}. \qed

\begin{remark}
Theorem~\ref{thmC} remains valid in greater generality. In fact, for any unital ring $N$ satisfying properties \eqref{vonN1}-\eqref{vonN7} of Lemma~\ref{lem.vonN}, we have the same characterisation of elements of class 2 in terms of the general linear group. This is because no other properties of $N$ as a unital ring were used in the proof. 
\end{remark}

\section{$\Delta$-sets and the proof of Theorem~\ref{thmB}}

In this section we prove the main result. This is a characterisation of a natural equivalence relation on the set of idempotents, but expressed via properties of the general linear group.

Let $N$ be a von Neumann algebra. We let  $P(N)$ denote the set of \emph{projection} elements in $N$, i.e.,
\begin{align*}
&P(N)\mydef\{p\in N : p=p^*=p^2\}.
\end{align*}

\begin{df}[cf.~\cite{MR0052795, MR0081885}]\label{def2.1}
Let $N$ be a von Neumann algebra containing an idempotent $e$, an involution $u$ and a nonempty subset $\phi$ of involutions. Set $\phi^2\mydef \{uv : u,v\in \phi\}$ and
\begin{align*}
&I^{\pm}(u)\mydef \{x\in N : ux=\pm x\}, \ I^+(\phi^2)\mydef\{x\in N: tx=x \ \text{for all}\ t\in \phi^2\},\\
&N(\phi)\mydef\{v\in \Inv(N): v\phi = \phi v\}, \ \ \ \Delta^\pm(e)\mydef \{v\in \Inv(N) : I^{\pm}(v)=eN\}.
\end{align*}
\end{df}
\begin{df}[cf.~\cite{MR0052795, MR0081885}]\label{delta.set}
Let $N$ be a von Neumann algebra containing a nonempty subset $\phi$ of involutions. We say $\phi$ is a $\Delta^+$-set (resp. $\Delta^-$-set) if $\phi=\Delta^+(e)$ (resp. $\Delta^-(e)$) for some idempotent $e$. We call $\phi$ a $\Delta$-set if it is a $\Delta^+$-set or a $\Delta^-$-set for some $e\in I(N)$. 
\end{df}

As in the previous section we include a few standard facts on von Neumann factors. Two idempotents $e,f$ are \emph{similar} if $e=ufu^{-1}$ for some invertible element $u$.

\begin{lem}\label{lem.vonN.2}
Let $N$ be a von Neumann factor. Then the following holds:
\begin{enumerate}
\item\label{vonN0} For each $x\in N$, $\frac{x}{2}\in N$ ($\frac{x}{2}$ is the unique $y\in N$ satisfying $y+y=x$).
\item\label{vonN.1} $P(N)$ is partially ordered via $p\leq q$ iff $pq=qp=p$.
\item\label{vonN.2} If $x\in N$ then $p_x\mydef \inf\{p\in P(N) : x=px\} \in P(N)$ and $x=p_xx$.
\item\label{vonN.3} If $y\in N$ then $q_y\mydef \inf\{q\in P(N) : y=yq\} \in P(N)$ and $y=yq_y$.
\item\label{vonN.4} If  $X\subseteq N$ then $q_X\mydef \sup \{q_x : x\in X\} \in P(N)$.
\item\label{vonN.5} If $yx=0$ for $y,x\in N$ then $q_yp_x=0$.
\item\label{vonN.6} If $y,x\in N$ are both nonzero then $yNx\neq \{0\}$.
\item\label{vonN.7} If $x\in N$ then $p_x=uu^*$ and $q_x=u^*u$ for some $u=uu^*u\in N$.
\item\label{vonN.8} Every idempotent in $N$ is similar to a projection in $N$.
\end{enumerate}
\end{lem}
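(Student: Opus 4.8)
The plan is to prove Lemma~\ref{lem.vonN.2}, the list of standard facts about von Neumann factors. Although the authors defer this to the reader, I would organize the proof around the key structural tools: the existence of square roots of halves (item~\eqref{vonN0}), the spectral/functional-calculus machinery giving support projections, and the polar decomposition. I expect the real content to lie in items~\eqref{vonN.2}--\eqref{vonN.7}, with \eqref{vonN.6} and \eqref{vonN.8} requiring the factor hypothesis more seriously.

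First I would dispose of \eqref{vonN0}: since $N$ is a von Neumann algebra, $2\cdot 1$ is invertible with inverse $\tfrac12 1$, so $y\mydef \tfrac12 x$ works and is unique because $N$ is cancellative for the element $2\cdot 1$. For \eqref{vonN.1} I would verify reflexivity, antisymmetry, and transitivity of $p\le q$ directly from $pq=qp=p$; antisymmetry uses that $pq=p$ and $qp=q$ force $p=q$. For \eqref{vonN.2} and \eqref{vonN.3} the idea is that the set of projections $p$ with $x=px$ is closed under the relevant lattice operation and is a downward-directed family whose infimum in the complete lattice $P(N)$ is again a projection; I would identify $p_x$ as the \emph{left support projection} (range projection of $x$) and $q_y$ as the \emph{right support projection}, noting $x=p_xx$ holds because the infimum still satisfies the defining equation by normality/SOT-continuity of multiplication on bounded sets. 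Item~\eqref{vonN.4} is then the dual statement using that $\sup$ of a family of projections exists in the complete lattice $P(N)$.

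For \eqref{vonN.5} I would argue that $yx=0$ means $x$ has range inside the kernel of $y$; concretely $q_y p_x=0$ because $p_x$ is the range projection of $x$ while $q_y$ (the right support of $y$) annihilates that range, which one checks by noting $y(q_y p_x \,\cdot)=\dots=0$ forces $q_y p_x$ to be dominated by a projection orthogonal to $q_y$. The cleanest route is to pass to the GNS/Hilbert-space representation of $N$ and read off the support projections as genuine orthogonal projections onto closure of ranges, where the statement becomes elementary linear algebra of ranges and kernels. Item~\eqref{vonN.7} is precisely the polar decomposition $x=u|x|$ with $u$ the partial isometry, giving $p_x=uu^*$, $q_x=u^*u$, and $u=uu^*u$; this is standard and I would simply cite polar decomposition in a von Neumann algebra.

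The two items where the factor hypothesis is essential are \eqref{vonN.6} and \eqref{vonN.8}. For \eqref{vonN.6}, if $yNx=\{0\}$ with $y,x\ne0$, then $q_y$ and $p_x$ are nonzero projections with $q_y N p_x=\{0\}$; the span of $Nq_yN$ is a two-sided ideal, and in a factor the only weak-closed two-sided ideals are $\{0\}$ and $N$, which I would leverage to derive $q_y=0$ or a contradiction with $p_x\ne0$. The subtle point is getting from the algebraic nonvanishing to a statement about the center, so the hardest part of the whole lemma is making the factor condition do its work here cleanly. Finally for \eqref{vonN.8}, given an idempotent $e$ I would produce an invertible $v$ with $veve^{-1}$ self-adjoint by the classical construction: set $p$ to be the range projection of $e$, and use that $e$ and $p$ generate an invertible implementing the similarity, e.g. via $z\mydef 1+(e-p)$ or the standard formula $p=e(e^*e+(1-e^*)(1-e))^{-1}\cdots$; the key obstacle is verifying invertibility of the relevant positive element, which follows because $e^*e+(1-e)(1-e)^*$ is a positive invertible element in $N$. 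I anticipate \eqref{vonN.6} to be the main obstacle, since every other item is either pure order theory, functional calculus, or a direct appeal to polar decomposition, whereas \eqref{vonN.6} is where one genuinely exploits that $N$ has trivial center.
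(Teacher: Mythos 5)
The paper's own ``proof'' of this lemma is the single line ``Left to the reader,'' so there is no argument of the authors' to compare yours against; the only question is whether your outline is sound, and it is. Your toolkit is the standard one and the right one: scalar multiplication for \eqref{vonN0}; direct verification for \eqref{vonN.1}; identification of $p_x$ and $q_y$ with the left and right support (range) projections, read off in a faithful normal representation, for \eqref{vonN.2}--\eqref{vonN.3} (note that the set $\{p\in P(N): x=px\}$ is closed under arbitrary infima because $\mathrm{ran}\,x\subseteq\mathrm{ran}\,p$ for each member, so the infimum itself satisfies the defining equation---this is cleaner than the appeal to SOT-continuity); completeness of the projection lattice for \eqref{vonN.4}; kernels versus ranges on Hilbert space for \eqref{vonN.5}; polar decomposition for \eqref{vonN.7}; and factoriality, via weakly closed ideals or central supports, exactly where you say it is needed, namely \eqref{vonN.6}, together with the standard similarity construction for \eqref{vonN.8}.

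Two steps you should tighten, though neither is a flaw in the approach. In \eqref{vonN.6} you assert without proof that $yNx=\{0\}$ forces $q_yNp_x=\{0\}$; this is true but needs two applications of \eqref{vonN.5}: for fixed $a\in N$, $(ya)x=0$ gives $q_{ya}p_x=0$, hence $yap_x=(ya)q_{ya}p_x=0$; then $y(ap_x)=0$ gives $q_yp_{ap_x}=0$, hence $q_yap_x=q_yp_{ap_x}(ap_x)=0$. With that in hand your ideal argument closes the case: the weak closure of $\mathrm{span}\,Nq_yN$ is a weakly closed two-sided ideal, hence all of $N$ (factor, $q_y\neq0$), yet right multiplication by $p_x$ annihilates $\mathrm{span}\,Nq_yN$ and so, by separate weak continuity of multiplication, annihilates its weak closure, forcing $p_x=0$, a contradiction. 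In \eqref{vonN.8}, the half-remembered formula can be completed as follows: $z\mydef 1+(e-e^*)(e^*-e)$ is invertible (indeed $z\geq 1$) and commutes with $ee^*$, and $p\mydef ee^*z^{-1}$ is a projection satisfying $ep=p$ and $pe=e$; then $(e-p)^2=0$, so $w\mydef 1+(e-p)$ is invertible and $wew^{-1}=p$. (Also, ``$veve^{-1}$'' is a typo for $vev^{-1}$.) Both points are routine completions of steps you explicitly flagged, not gaps in the method.
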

\begin{proof}
Left to the reader.
\end{proof}

We recall a few standard facts. These are purely algebraic observations. The proofs are included for completeness.

\begin{lem}[cf.~\cite{MR0052795, MR0081885, MR0120174}]\label{lem2.4}
Let $N$ be a von Neumann algebra containing an idempotent $e$. Then
\begin{enumerate}
\item\label{max.0} If $u\mydef2e-1$ then $u\in \Inv(N)$, $I^{+}(u)=eN$ and $I^{-}(u)=(1-e)N$.
\item\label{max.1} If $u\in \Inv(N)$ then $I^{+}(u)+I^{-}(u)=N$ and $I^{+}(u)\cap I^{-}(u)=\{0\}$.
\item\label{max.2} If $u,v\in \Inv(N)$ satisfy $I^{\pm}(u)=I^{\pm}(v)$ then $u=v$.
\item\label{max.3} If $e,f\in I(N)$ then $eN=fN \Leftrightarrow f=e+ey(1-e)$ for some $y\in N$.
\end{enumerate}
\end{lem}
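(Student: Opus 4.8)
The four parts are elementary algebraic identities, and the plan is to treat each in turn, reducing \eqref{max.1} and \eqref{max.2} to \eqref{max.0} and exploiting throughout the basic observation that for an idempotent $e$ one has $x\in eN$ if and only if $ex=x$. For \eqref{max.0} I would first compute $u^2=(2e-1)^2=4e^2-4e+1=1$ using $e^2=e$, so $u\in\Inv(N)$. Then $ux=x$ is equivalent to $2ex=2x$, hence, dividing by $2$ via Lemma~\ref{lem.vonN.2}\eqref{vonN0}, to $ex=x$, which is exactly membership in $eN$; similarly $ux=-x$ reduces to $ex=0$, equivalently $(1-e)x=x$, i.e. membership in $(1-e)N$. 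This yields $I^+(u)=eN$ and $I^-(u)=(1-e)N$.

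For \eqref{max.1}, given an involution $u$, the cleanest route is the direct spectral splitting: for any $x\in N$ write
\[
x=\tfrac{1}{2}(x+ux)+\tfrac{1}{2}(x-ux),
\]
and observe, using $u^2=1$, that $u\cdot\tfrac12(x+ux)=\tfrac12(x+ux)$ and $u\cdot\tfrac12(x-ux)=-\tfrac12(x-ux)$, so the two summands lie in $I^+(u)$ and $I^-(u)$ respectively, giving $I^+(u)+I^-(u)=N$; for the intersection, $x\in I^+(u)\cap I^-(u)$ forces $x=ux=-x$, hence $2x=0$ and $x=0$. (Alternatively one could set $e=\tfrac{u+1}{2}$, verify $e^2=e$, and invoke \eqref{max.0}.) Part \eqref{max.2} then follows by feeding this decomposition into the hypothesis $I^{\pm}(u)=I^{\pm}(v)$: writing $x=x^++x^-$ with $x^\pm\in I^\pm(u)=I^\pm(v)$, both $u$ and $v$ send $x$ to $x^+-x^-$, so $ux=vx$ for every $x$, and evaluating at $x=1$ gives $u=v$.

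For \eqref{max.3} the key is that $eN=fN$ is equivalent to the pair of relations $ef=f$ and $fe=e$: if $eN=fN$ then $f\in eN$ gives $ef=f$ and $e\in fN$ gives $fe=e$, and conversely these two relations force mutual containment of the ideals. Granting $ef=f$ and $fe=e$, I would write $f=ef=efe+ef(1-e)=e+ef(1-e)$, using $efe=e(fe)=e\cdot e=e$, so that $y=f$ works. For the reverse implication, from $f=e+ey(1-e)$ one computes $fe=e$ and $ef=f$ directly (note $(1-e)e=0$), recovering $eN=fN$.

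There is no serious obstacle here; these are purely algebraic facts. The only points requiring care are the availability of division by $2$, which is supplied by Lemma~\ref{lem.vonN.2}\eqref{vonN0}, and, in \eqref{max.3}, the correct bookkeeping that converts the ideal equality $eN=fN$ into the idempotent relations $ef=f$ and $fe=e$ and then produces the explicit element $y=f$.
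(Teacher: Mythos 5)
Your proof is correct, and for parts \eqref{max.0}, \eqref{max.1} and \eqref{max.3} it is essentially the paper's own argument: the same computation $(2e-1)^2=1$ with the identification of the eigenspaces $eN$ and $(1-e)N$, the same splitting $x=\tfrac{1}{2}(x+ux)+\tfrac{1}{2}(x-ux)$, and the same reduction of $eN=fN$ to the relations $ef=f$, $fe=e$ (your witness $y=f$ gives $ey(1-e)=ef(1-e)=f-e$, which is exactly the paper's choice $y=f-e$). The only genuine divergence is in \eqref{max.2}: the paper writes $u=2e-1$, $v=2f-1$, translates $I^{\pm}(u)=I^{\pm}(v)$ into $eN=fN$ and $(1-e)N=(1-f)N$, extracts $ef=f$ from the first and, by expanding $(1-e)(1-f)=1-f$, gets $ef=e$, whence $e=f$ and $u=v$; you instead feed the decomposition of \eqref{max.1} back in, observing that any $x=x^{+}+x^{-}$ with $x^{\pm}\in I^{\pm}(u)=I^{\pm}(v)$ satisfies $ux=x^{+}-x^{-}=vx$, and then evaluate at $x=1$. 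Both are two-line arguments; yours has the small advantage of never leaving the involution picture (no conversion to idempotents), while the paper's version stays consistent with the idempotent bookkeeping it uses throughout. One cosmetic remark: you invoke Lemma~\ref{lem.vonN.2}\eqref{vonN0} for division by $2$, but that lemma is stated for factors whereas the present lemma is for general von Neumann algebras; this is harmless, since halving is available in any complex algebra, and indeed the paper's proof divides by $2$ silently for the same reason.
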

\begin{proof}
\eqref{max.0}: If $e\in I(N)$ then $(2e-1)^2=1$, so $2e-1\in \Inv(N)$. Set $u\mydef 2e-1$. Then 
$I^+(u)=\{x \in N : (2e-1)x=x \}=\{x \in N : 2ex=2x \}=eN$ and $I^-(u)=\{x \in N : 2ex=0 \}=\{x \in N : (1-e)x=x \}=(1-e)N$.

\eqref{max.1}: If $x\in N$, set $x_\pm\mydef\frac{x\pm ux}{2}\in N$. Then $ux_\pm=\pm x_\pm$, so $x=x_++x_-$, $x_+\in I^+(u)$ and $x_-\in I^-(u)$ giving $N=I^+(u)+I^-(u)$. If $x\in I^+(u)\cap I^-(u)$ then $ux=x=-(-x)=-ux$ so $2ux=0$. Using $u^2=1$ we get $x=0$.

\eqref{max.2}: For suitable $e,f$ such that $u=2e-1$ and $v=2f-1$ we get $eN=fN$ and $(1-e)N=(1-f)N$. Hence $ef=f$ and $(1-e)(1-f)=1-f$. The latter gives $1-e-f+ef=1-f$. So $ef=e$, $f=ef=e$ and $u=v$.

\eqref{max.3}:  `$\Leftarrow$' $fN=(e+ey(1-e))N\subseteq eN$, and $fe=e$ so $eN=feN\subseteq fN$. `$\Rightarrow$' Using $e=ee=fx$ for some $x$, $fe=f(fx)=fx=e$. Similarly $ef=f$. One now verify $f=e+ey(1-e)$ for $y\mydef f-e$.
\end{proof}

We are now in position to establish properties of a $\Delta^+$-set which (up to maximality) characterise the $\Delta^+$-set.

\begin{lem}\label{lem2.5}
Let $N$ be a von Neumann factor. Then every $\Delta^+$-set $\phi$ satisfies the following properties: 
\begin{enumerate}
\item\label{delta.aa} If $u,v,w\in \phi$ then $uvw=wuv\in \phi$.
\item\label{delta.bb} If $u,v\in \phi$ then there exist a unique $w\in \phi$ such that $wvw=u$.
\item\label{delta.cc} If $u\in \Inv(N)$, then $u\in N(\phi)$ iff $wuw=u$ for some $w\in\phi$.
\item\label{delta.dd} If $t\in N$ is an elements of $\phi^2$ then $t$ is of class 1 or class 2.
\end{enumerate}
\end{lem}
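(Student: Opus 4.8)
The plan is to fix a $\Delta^+$-set $\phi=\Delta^+(e)$ for some idempotent $e$, set $u_0\mydef 2e-1$, and establish the four properties by exploiting the defining condition $v\in\phi \iff I^+(v)=eN$ together with the translation between idempotents and involutions from Lemma~\ref{lem2.4}. The central observation I would use throughout is that every $v\in\phi$ has the form $v=2f-1$ with $fN=eN$ (by Lemma~\ref{lem2.4}\eqref{max.0}--\eqref{max.2}), so by Lemma~\ref{lem2.4}\eqref{max.3} we may write $f=e+ey(1-e)$ for some $y\in N$; equivalently $v=u_0+2e y(1-e)$. Parametrising the elements of $\phi$ in this way by the ``off-diagonal'' part $ey(1-e)\in eN(1-e)$ converts the involution algebra into a concrete matrix computation with respect to the orthogonal decomposition $1=e+(1-e)$, where each $v\in\phi$ is $\minimatrix{1}{2n}{0}{-1}$ with $n\mydef ey(1-e)$ and $n^2=0$.

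For \eqref{delta.aa}, I would simply multiply three such matrices. Writing $v_i=\minimatrix{1}{2n_i}{0}{-1}$ one checks $v_1v_2v_3=\minimatrix{1}{2(n_1-n_2+n_3)}{0}{-1}$, which is symmetric in the cyclic sense forced by the computation, giving both $uvw=wuv$ and membership in $\phi$ (the resulting off-diagonal term $n_1-n_2+n_3$ again lies in $eN(1-e)$, so squares to zero and the product is an involution with $I^+=eN$). For \eqref{delta.bb}, solving $wvw=u$ with $w=\minimatrix{1}{2m}{0}{-1}$, $v=\minimatrix{1}{2n}{0}{-1}$, $u=\minimatrix{1}{2p}{0}{-1}$ reduces to a linear equation in $m$ over $eN(1-e)$ whose unique solution I expect to be $m=\tfrac{1}{2}(p+n)$ — here Lemma~\ref{lem.vonN.2}\eqref{vonN0} supplies the halving — and uniqueness follows because two elements of $\phi$ with the same off-diagonal part coincide.

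Property \eqref{delta.cc} is where the real content lies, and I expect this to be the main obstacle. The condition $u\in N(\phi)$ means $u\phi=\phi u$, i.e. $u$ normalises the coset of involutions sharing the right ideal $eN$; I would show this is equivalent to $u$ either preserving or swapping the pair of subspaces $eN$ and $(1-e)N$, and then argue that in the normalising case $u$ must commute with \emph{some} specific $w\in\phi$ (the natural candidate being $u_0$ itself when $u$ commutes with $e$, or an adjusted involution otherwise). The forward direction ``$wuw=u$ for some $w\Rightarrow u\in N(\phi)$'' should be a direct verification once one knows conjugation by $w\in\phi$ permutes $\phi$ (which follows from \eqref{delta.aa}). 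The converse requires extracting, from the global normalising hypothesis, an explicit $w\in\phi$ centralising $u$ in the twisted sense $wuw=u$; this will likely use the factor property (Lemma~\ref{lem.vonN.2}\eqref{vonN.6}) to rule out degenerate configurations and to produce the required $w$.

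Finally, \eqref{delta.dd} is immediate from the matrix picture: for $t=v_1v_2\in\phi^2$ with $v_i=\minimatrix{1}{2n_i}{0}{-1}$ one computes $t=\minimatrix{1}{2(n_1-n_2)}{0}{1}$, so $(t-1)^2=\minimatrix{0}{2(n_1-n_2)}{0}{0}^2=0$ since $(n_1-n_2)\in eN(1-e)$ annihilates itself on the right. Hence either $t=1$ (class 1) or $t\neq 1$ with $(t-1)^2=0$ (class 2), as required by Definition~\ref{def.classtwo}.
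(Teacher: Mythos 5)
Your parametrisation of $\phi=\Delta^+(e)$ as $\bigl\{\minimatrix{1}{2n}{0}{-1} : n\in eN(1-e)\bigr\}$ is correct, and it is essentially the computation the paper itself runs (there it appears as $uv=2(f-e)+1$ with $f-e\in eN(1-e)$, via Lemma~\ref{lem2.4}\eqref{max.3}); it does settle \eqref{delta.bb} and \eqref{delta.dd}. But in \eqref{delta.aa} you draw the wrong conclusion from a right computation: the entry $n_1-n_2+n_3$ of $v_1v_2v_3$ is invariant under interchanging $n_1\leftrightarrow n_3$, i.e.\ under \emph{reversal} of the three factors, not under cyclic permutation. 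What your matrices prove is $uvw=wvu$. The cyclic identity $uvw=wuv$ is false in general: taking $w=u$ it reads $uvu=v$, i.e.\ every pair in $\phi$ commutes, which fails e.g.\ for $u=2e-1$, $v=2f-1$ with $e=\minimatrix{1}{0}{0}{0}$, $f=\minimatrix{1}{1}{0}{0}$ in $M_2(\CC)$. (The lemma as printed, and Theorem~B, contain the same misprint; the paper's own proof derives $uvw=u-v+w=wvu$, and it is this reversal identity that is used later, in Step~2 of the proof of Lemma~\ref{lem2.9}.) So you should state and prove the reversal form; ``symmetric in the cyclic sense forced by the computation'' is not something your computation gives.

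The genuine gap is \eqref{delta.cc}, which you yourself flag as the main content and then leave as a sketch whose intermediate claims do not hold up. (a) The condition $u\in N(\phi)$ is \emph{not} equivalent to ``$u$ preserves or swaps $eN$ and $(1-e)N$'': since $I^+(uvu)=uI^+(v)$ for involutions $u,v$, conjugation by $u$ carries $\Delta^+(e)$ onto the $\Delta^+$-set of the ideal $u(eN)=(ueu)N$, so $u\in N(\phi)$ iff $u(eN)=eN$; in the swapping case one gets $u\phi u=\Delta^+(1-e)\neq\phi$. (b) The implication ``$wuw=u$ for some $w\in\phi$ $\Rightarrow u\in N(\phi)$'' does \emph{not} follow from the fact that conjugation by elements of $\phi$ permutes $\phi$: combining that fact with $uwu=w$ only returns the circular identity $uvu=w\,\bigl(u(wvw)u\bigr)\,w$, which reduces the question for $v$ to the same question for $wvw$. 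What is needed is the ideal argument: $uw=wu$ with $w=2f-1\in\phi$ gives $u(fN)=fN=eN$, whence $I^+(uvu)=u(eN)=eN$, i.e.\ $uvu\in\phi$, for every $v\in\phi$. (c) For the converse, no appeal to the factor property (your Lemma~\ref{lem.vonN.2}\eqref{vonN.6}) is needed and your plan produces no candidate: if $u\in N(\phi)$ and $w=u_0+2n\in\phi$, then $uwu\in\phi$, say $uwu=u_0+2n'$, and $w^{\dag}\mydef\tfrac12(w+uwu)=u_0+2\cdot\tfrac12(n+n')$ again lies in $\phi$ by your parametrisation and visibly satisfies $uw^{\dag}u=w^{\dag}$, i.e.\ $w^{\dag}uw^{\dag}=u$; this is exactly the paper's argument, phrased there through property \eqref{delta.bb} and the identity $w^{\dag}ww^{\dag}=2w^{\dag}-w$. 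Until (a)--(c) are repaired, your proposal proves \eqref{delta.bb}, \eqref{delta.dd} and the corrected form of \eqref{delta.aa}, but not \eqref{delta.cc}.
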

\begin{proof}
\eqref{delta.dd}: Fix $t\in \phi^2$. Select $u,v\in \phi$ such that $t=uv$. Since elements of a $\Delta^+$-set have the same $I^+$-set,  $I^{+}(u)=I^{+}(v)$. For $u=2e-1$ and $v=2f-1$, $eN=I^{+}(u)=I^{+}(v)=fN$ by Lemma~\ref{lem2.4}\eqref{max.0}. If follows that $ef=f$ and $fe=e$, see Lemma~\ref{lem2.4}\eqref{max.3}. Hence $uv=(2e-1)(2f-1)=4ef-2f-2e+1=2(f-e)+1$ and $(t-1)^2=4(f-e)^2=0$. So $t$ is of class 1 or class 2.

\eqref{delta.aa}: Using $u,v,w\in \phi$, $I^{+}(u)=I^{+}(v)=I^{+}(w)$. Similarly to the proof of \eqref{delta.d}, $uvw= (2e-1)(2f-1)(2g-1)= 2(e-f+g)-1= u-v+w$ for suitable $e,f,g\in I(N)$ (using $eN=fN\Rightarrow ef=f$, etc.). This implies $uvw=wvu$, so $uvw\in \Inv(N)$. Now $I^{+}(uvw)=(e-f+g)N=(e-ef+eg)N\subseteq eN=I^{+}(u)$ by Lemma~\ref{lem2.4}, and conversely if $x\in I^{+}(u)=I^{+}(v)=I^{+}(w)$, then $uvwx=x$ and $x\in I^{+}(uvw)$. Since $u\in\phi$ and $I^{+}(u)=I^{+}(uvw)$, also $uvw\in\phi$.

\eqref{delta.bb}: Fix any $u,v\in \phi$. As in the proof of \eqref{delta.d},  $uv=2(f-e)+1$. By symmetry $vu=2(e-f)+1$, so $uv+vu=2$ and $uvu+v=2u$ (using $u^2=1$). For $w\mydef \frac{u+v}{2}$ we get $4wvw=(uv+1)(u+v)=uvu+2u+v=4u$ (using $uvu+v=2u$), so $wvw=u$. Similarly $4w^2=(u+v)(u+v)=4$ (using $uv+vu=2$), so $w\in Inv(N)$. Since $2w=u+v=2(e+f)-2$ it follows that $I^{+}(w)=(e+f)N=(e+ef)N\subseteq eN=I^{+}(u)$. Conversely if $x\in I^{+}(u)=I^{+}(v)$, then $(u+v)x=2x$ and $x\in I^{+}(w)$. Therefore $w\in \phi$. For the uniqueness take $u,w,v\in \phi$ such that $wvw=u$. As in \eqref{delta.a} we get $wvw=w-v+w$. Hence $u=w-v+w$ and $w=\frac{u+v}{2}$.

\eqref{delta.cc}: Fix $u\in \Inv(N)$. If $u\in N(\phi)$, then $uw=w^\star u$ for some $w,w^\star\in \phi$, so $uwu=w^\star$. Using $\eqref{delta.b}$, $w^{\dag}ww^{\dag}=w^\star$ for some $w^{\dag}\in \phi$. As in $\eqref{delta.a}$ we get $w^{\dag}ww^{\dag}=2w^{\dag}-w$. Hence $uwu=w^\star=w^{\dag}ww^{\dag}=2w^{\dag}-w$, so $w^{\dag}=\frac{uwu+w}{2}$ commutes with $u$. Conversely, suppose $vu=uv$ for some $v\in\phi$. To show $u\phi=\phi u$, fix any $w\in \phi$. Since $v$ and $w$ belong to the same $\Delta^+$-set $I^{+}(v)=I^{+}(w)$. For $w=2e-1$, $v=2f-1$, $eN=I^{+}(w)=I^{+}(v)=fN$ by Lemma~\ref{lem2.4}. We have $w^\star\mydef2(ueu)-1\in \Inv(N)$ using $ueu\in I(N)$. Since $uv=vu$ we get $uf=fu$ and $I^{+}(w^\star)=(ueu)N=ueN=ufN=fN$ by Lemma~\ref{lem2.4}. Now $v\in\phi$ and $I^{+}(w^\star)=I^{+}(v)$ giving that $w^\star\in\phi$. Using $uwu=w^\star$ we get $uw=w^\star u$. It follows that $u\phi=\phi u$, i.e., $u\in N(\phi)$.
\end{proof}

The following lemmata relies heavily on the properties listed in Lemma~\ref{lem.vonN.2}. In \cite{MR0081885} similar results were established using dimension theory, irreducibility, regularity and lattice properties of continuous rings, see \cite[Proposition~3 and Proposition~8]{MR0081885}.

\begin{lem}\label{lem2.6}
Let $N$ be a von Neumann factor. Suppose $e\in I(N)$, $B\subseteq Ne$ and $A\mydef \{x\in eN : bx=0 \ \text{for all} \ b\in B\}$ satisfy $A\neq \{0\}$ and $vA=A$ for all $v\in \Inv(eNe)$. If $e=e^*$, then $A=eN$.
\end{lem}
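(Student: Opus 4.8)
The plan is to show that the annihilator $A$ is in fact a principal right ideal $A=pN$ generated by a projection $p\leq e$, and then to exploit the invariance $vA=A$ under \emph{all} involutions of the corner $eNe$ to force $p=e$.

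First I would encode $A$ through support projections. For $b\in B\subseteq Ne$ we have $b=be$, so the right support satisfies $q_b\leq e$ (this is exactly where $e=e^*$, hence $e\in P(N)$, is used); consequently $q_B\mydef\sup\{q_b:b\in B\}\leq e$ by Lemma~\ref{lem.vonN.2}\eqref{vonN.4}, and $p\mydef e-q_B$ is a projection with $p\leq e$. Using Lemma~\ref{lem.vonN.2}\eqref{vonN.5} together with its converse (immediate from $b=bq_b$ and $x=p_xx$), for $x\in eN$ one checks that $bx=0$ for all $b\in B$ iff $q_Bp_x=0$ iff $p_x\leq p$ iff $x\in pN$. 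This identifies $A=pN$, and in particular the hypothesis $A\neq\{0\}$ gives $p\neq 0$.

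Next I would assume for contradiction that $p\neq e$, so that $q_B=e-p\neq 0$ while $p\neq 0$. Since $N$ is a factor, Lemma~\ref{lem.vonN.2}\eqref{vonN.6} supplies a nonzero $x\in q_BNp$; then $p_x\leq q_B$ and $q_x\leq p$, and Lemma~\ref{lem.vonN.2}\eqref{vonN.7} yields a partial isometry $w=ww^*w$ with $w^*w=p_0\leq p$ and $ww^*=q_0\leq q_B$, both nonzero. I would then set $v\mydef w+w^*+(e-p_0-q_0)$. Because $p_0q_0=0$ (as $p_0\leq p\perp q_B\geq q_0$), a short computation gives $v\in eNe$ and $v^2=p_0+q_0+(e-p_0-q_0)=e$, so $v\in\Inv(eNe)$.

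Finally I would derive the contradiction. Since $p\in pN=A$ and $vA=A$, we must have $vp\in A=pN$, i.e.\ $p(vp)=vp$. But $wp=w$ (as $w=wp_0$ and $p_0\leq p$), $w^*p=0$ (as $w^*=p_0w^*q_0$ and $q_0p=0$), and $(e-p_0-q_0)p=p-p_0$, whence $vp=w+p-p_0$ while $p(vp)=p-p_0$; since $w\neq 0$ these differ, so $vp\notin pN$, a contradiction. Therefore $p=e$ and $A=pN=eN$. The main obstacle is the reduction $A=pN$ coupled with the recognition that invariance under \emph{every} corner involution is rigid enough to exclude a proper $p$; the crux is manufacturing the symmetry $v$ that swaps a subprojection of $p$ with one of $q_B$ and verifying $v\in\Inv(eNe)$, which is precisely where the factor property \eqref{vonN.6} and the support calculus \eqref{vonN.7} enter.
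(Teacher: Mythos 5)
Your proof is correct and follows essentially the same route as the paper's: both encode $A$ via the support-projection calculus of Lemma~\ref{lem.vonN.2}, use the factor property (Lemma~\ref{lem.vonN.2}\eqref{vonN.6}) and the polar decomposition (Lemma~\ref{lem.vonN.2}\eqref{vonN.7}) to manufacture a symmetry $v=w+w^*+(e-p_0-q_0)\in \Inv(eNe)$ swapping a subprojection of the carrier of $A$ with a subprojection of $q_B$, and then contradict $vA=A$. The only difference is organizational: you first identify $A=pN$ with $p=e-q_B$ and test the invariance on $p$ itself, whereas the paper tests it on an arbitrary nonzero $z\in A$ (via its support $q_{z^*}$), arriving at the same contradiction.
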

\begin{proof}
Using Lemma~\ref{lem.vonN.2}\eqref{vonN.4} set $q\mydef q_B \in N$. The proof consist of five steps.

Step 1.~``{\color{black}$A=\{x\in N : x=ex=(1-q)x\}$}'': Fix $x\in N$ such that $x=ex=(1-q)x$. Using Lemma~\ref{lem.vonN.2}\eqref{vonN.1}-\eqref{vonN.2}, $p_x\leq 1-q$. Using Lemma~\ref{lem.vonN.2}\eqref{vonN.3}, $b=bq_b$. Using Lemma~\ref{lem.vonN.2}\eqref{vonN.2}, $x=p_xx$. Using Lemma~\ref{lem.vonN.2}\eqref{vonN.4}, $q_b\leq q$ for $b\in B$. So $bx=bq_bq(1-q)p_xx=0$. Hence $x\in eN$ and $bx=0$ for all $b\in B$, i.e., $x\in A$. Conversely, fix $x\in eN$ such that $bx=0$ for all $b\in B$. By Lemma~\ref{lem.vonN.2}\eqref{vonN.5}, $q_bp_x=0$ for $b\in B$, so $p_x\leq 1-q_b$. Then $q_b\leq 1-p_x$ for $b\in B$. Using Lemma~\ref{lem.vonN.2}\eqref{vonN.4}, $q\leq 1-p_x$, so $(1-q)p_x=p_x$. It follows that $x=p_xx=(1-q)p_xx=(1-q)x$ and $x=xe$.

Assume $q\neq 0$. Using that $A\neq \{0\}$ select a nonzero $z\in A$. Using Lemma~\ref{lem.vonN.2}\eqref{vonN.3}, $q_{z^*}\neq 0$. Using Lemma~\ref{lem.vonN.2}\eqref{vonN.6}, there exists a nonzero element $y\in q_{z^*}Nq$. 

Step 2.~``{\color{black}$y\in eNe$}'': For each $b\in B\subseteq Ne$ we have $b=be$. We now have that $b(1-e)=0$, so $q_bp_{1-e}=0$ by Lemma~\ref{lem.vonN.2}\eqref{vonN.5}. It follows that $q_b\leq 1-p_{1-e}$, $q\leq 1-p_{1-e}$, $qp_{1-e}=0$, $qp_{1-e}(1-e)=0$, $q(1-e)=0$, so $q=qe$. Consequently, $y\in q_{z^*}Nq=Nqe\subseteq Ne$. Since $z\in A\subseteq eN$ and $e=e^*$ we have $z^*=(ez)^*=z^*e$, so $z^*(1-e)=0$. Using Lemma~\ref{lem.vonN.2}\eqref{vonN.5}, $q_{z^*}(1-e)=0$, so $(1-e)q_{z^*}=0$ and $eq_{z^*}=q_{z^*}$. So $y\in q_{z^*}Nq\subseteq eN$. We conclude $y\in eNe$.

Step 3.~``{\color{black}$p_y \ \bot \ q_y$}'': Using Lemma~\ref{lem.vonN.2}\eqref{vonN.2} on $y=q_{z^*}y$ ($y\in q_{z^*}Nq$), $p_y\leq q_{z^*}$. Using Lemma~\ref{lem.vonN.2}\eqref{vonN.3} on $y=yq$ ($y\in q_{z^*}Nq$), $q_y\leq q$. Since $z=(1-q)z$ we have $qz=0$ and $z^*q=0$. Using Lemma~\ref{lem.vonN.2}\eqref{vonN.5}, $q_{z^*}q=0$, so $q_{z^*}\leq 1-q$. Hence $p_y \leq q_{z^*} \leq 1-q$ and $q_y\leq q$. We conclude $p_y \ \bot \ q_y$ (i.e., $p_yq_y=0$).

By Lemma~\ref{lem.vonN.2}\eqref{vonN.7}, there exists $u=uu^*u\in N$ such that $p_y=uu^*$ and $q_y=u^*u$. Set 
$$v\mydef e(u+u^*+1-p_y-q_y)e.$$ 

Step 4.~``{\color{black}$v^2=e$}'': First we show $u, u^*, p_y,q_y\in eNe$. Using $u=uu^*u$, $p_y=uu^*$, $p_y\leq q_{z^*}$ and $eq_{z^*}=q_{z^*}$ we have $u=eq_{z^*}p_yu=eu$. Using $u=uu^*u$, $q_y=u^*u$, $q_y\leq q$ and $qe=q$ we have $u=uq_yqe=ue$. So $eue=u$. Using $e=e^*$, $eu^*e=u^*$. We have $eq_ye=q_y$ using that $q_y\leq q\leq e$ (from $q=qe$ and $y=yq$). We have $ep_ye=p_y$ using that $p_y\leq q_{z^*}\leq e$ (from $y=q_{z^*}y$ and $eq_{z^*}=q_{z^*}$). So $v=u+u^*+e-p_y-q_y$.
It is clear that $(u+u^*)$ is orthogonal to $e-(p_y+q_y)$ because $(u+u^*)e=(u+u^*)$ and $(u+u^*)(p_y+q_y)=uq_y+u^*p_y=(u+u^*)$ using that $p_y \ \bot \ q_y$ and $u=p_yuq_y$. Similary we deduce $uu=uq_yp_yu=0$, so $u^2=0$, $(u^*)^2=0$. We conclude 
$$v^2=(u+u^*)^2+(e-(p_y+q_y))^2=uu^*+u^*u+e-(p_y+q_y)=e.$$

Step 5.~``{\color{black}$y=0$}'': By assumption applied to $v\in Inv(eNe)$ we get $vA=A$. Hence $vz\in A$ so $vz=(1-q)vz$ giving $qvz=0$. Hence $uqvz=0$. Using $u=uq_y$ and $q_y\leq q$, $uq=(uq_y)q=u(q_yq)=uq_y=u$, so $uvz=0$. Using $u=uq_y=ue$ and $v=p_yu+q_yu^*+e-p_y-q_y$ we get $uv=0+uu^*+u-0-u=p_y$. So $p_yz=0$ and hence $z^*p_y=0$. Using Lemma~\ref{lem.vonN.2}\eqref{vonN.5}, $q_{z^*}p_y=0$. But $p_y\leq q_{z^*}$, so $p_y=0$. We conclude $y=p_yy=0y=0$.

But this contradicts the fact that $y\neq 0$. We conclude $q=0$, $A=eN$.
\end{proof}

\begin{lem}\label{lem2.7}
Let $N$ be a von Neumann factor. Suppose $e\in I(N)$, $B\subseteq Ne$ and $A\mydef \{x\in eN : bx=0 \ \text{for all} \ b\in B\}$ satisfy $A\neq \{0\}$ and $vA=A$ for all $v\in \Inv(eNe)$. Then $A=eN$.
\end{lem}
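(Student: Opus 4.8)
The plan is to reduce Lemma~\ref{lem2.7} to its self-adjoint counterpart Lemma~\ref{lem2.6} by conjugating the idempotent $e$ to a projection. By Lemma~\ref{lem.vonN.2}\eqref{vonN.8} there is a projection $p\in P(N)$ and an invertible $w\in GL(N)$ with $e=wpw^{-1}$, equivalently $p=w^{-1}ew$. Conjugation by $w$ will carry the entire configuration $(e,B,A)$ to a configuration $(p,B',A')$ of the kind to which Lemma~\ref{lem2.6} applies, and pulling the conclusion back through $w$ will yield $A=eN$.

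First I would transport the data. Since $w^{-1}N=N$ we have $eN=wpw^{-1}N=w(pN)$. For $b\in B\subseteq Ne$ we have $b=be$, hence $bw=bew=bwp$, so $B'\mydef\{bw:b\in B\}\subseteq Np$. Setting $A'\mydef\{x'\in pN: b'x'=0 \text{ for all } b'\in B'\}$, the substitution $x=wx'$ gives a bijection $eN\to pN$ (via $x'=w^{-1}x$), under which the defining condition $bx=0$ becomes $(bw)x'=0$; thus $A=wA'$, and in particular $A'\neq\{0\}$ because $A\neq\{0\}$.

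Next I would transfer the invariance hypothesis. Because $w^{-1}(eNe)w=(w^{-1}ew)N(w^{-1}ew)=pNp$ and $(w^{-1}vw)^2=w^{-1}v^2w$, the map $v\mapsto w^{-1}vw$ is a bijection of $\Inv(eNe)$ onto $\Inv(pNp)$. Given any $v'\in\Inv(pNp)$, put $v\mydef wv'w^{-1}\in\Inv(eNe)$; the hypothesis gives $vA=A$, and using $A=wA'$ this reads $wv'A'=wA'$, so $v'A'=A'$ after cancelling the invertible $w$. As $v'$ ranges over all of $\Inv(pNp)$, this establishes $v'A'=A'$ for every $v'\in\Inv(pNp)$.

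All the hypotheses of Lemma~\ref{lem2.6} now hold for the triple $(p,B',A')$ with $p=p^*$, so Lemma~\ref{lem2.6} yields $A'=pN$, whence $A=wA'=w(pN)=eN$. The only nonroutine point is the transfer of the invariance condition, i.e.\ checking that conjugation by $w$ sets up a bijection between $\Inv(eNe)$ and $\Inv(pNp)$ compatible with the identification $A=wA'$; everything else is a direct computation with the idempotent identities $b=be$ and $w^{-1}N=N$.
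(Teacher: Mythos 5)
Your proposal is correct and takes essentially the same route as the paper's proof: invoke Lemma~\ref{lem.vonN.2}\eqref{vonN.8} to write $e=wpw^{-1}$ with $p$ a projection, transport $B$, $A$ and the $\Inv(eNe)$-invariance across this similarity, apply Lemma~\ref{lem2.6} to the transported data, and pull the conclusion back. The only cosmetic difference is that you transport by one-sided multiplication ($B'=Bw$, $A'=w^{-1}A$) whereas the paper conjugates ($B'=uBu^{-1}$, $A'=uAu^{-1}$ with $u=w^{-1}$); since $A$ is a right ideal these give the same set $A'$, and both verifications go through identically.
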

\begin{proof}
By Lemma~\ref{lem.vonN.2}\eqref{vonN.8} any idempotent is similar to a projection. Hence we can find $u\in GL(N)$ such that $p=ueu^{-1}\in P(N)$. Set 
\begin{align}\label{text.0}
B'\mydef uBu^{-1}.
\end{align}
Using $B\subseteq Be$ we get $B'=uBu^{-1}\subseteq uBeu^{-1}=uBu^{-1}ueu^{-1}=B'p$. Set
\begin{align}\label{text.0.1}
A'\mydef uAu^{-1}.
\end{align}
Since $A\neq \{0\}$ we get $A'\neq \{0\}$. Since $u(eN)u^{-1}=ueN=puN=pN$ we get
\begin{align*}
A'&= \{uxu^{-1}\in ueN u^{-1} : bx=0 \ \text{for all} \ b\in B\}\\
&= \{uxu^{-1}\in pN : ubu^{-1}uxu^{-1}=0 \ \text{for all} \ b\in B\}\\
&= \{y\in pN : ubu^{-1}y=0 \ \text{for all} \ b\in B\}\\
&= \{y\in pN : ubu^{-1}y=0 \ \text{for all} \ ubu^{-1}\in uBu^{-1}\}\\
&= \{y\in pN : b'y=0 \ \text{for all} \ b'\in B'\}
\end{align*}
We claim that $wA'=A'$ for all $w\in \Inv(pNp)$. To see this fix $w\in \Inv(pNp)$. Notice that $\Inv(pNp)=u\Inv(eNe)u^{-1}$ because if $v\in \Inv(eNe)$, then 
\begin{align*}
uvu^{-1}=ueveu^{-1}=(ueu^{-1})uvu^{-1}(ueu^{-1})\in \Inv(pNp),
\end{align*}
and similarly for the converse containment. Hence $w=uvu^{-1}$ for some $v\in \Inv(eNe)$. By assumption we know that $vA=A$. It follows that $wA'=uvu^{-1}uAu^{-1}=A'$.

Having that $p\in I(N)$, $B'\subseteq Np$ and $A'= \{y\in pN : b'y=0 \ \text{for all} \ b'\in B'\}$ satisfy $wA'=A'\neq \{0\}$ for all $w\in \Inv(pNp)$, and $p=p^*$, we can now apply Lemma~\ref{lem2.6} to deduce $A'=pN$. It now follows that 
\begin{align*}
A&=u^{-1}uAu^{-1}u=u^{-1}pNu=u^{-1}(ueu^{-1})N=eN. \qedhere
\end{align*}
\end{proof}

\begin{lem}\label{lem2.8}
Let $N$ be a von Neumann factor. Suppose $u\in \Inv(N)$, $B\subseteq N$ and $A\mydef \{x\in N : bx=0 \ \text{for all} \ b\in B\}$ satisfy $vA=A$ for all $v\in C(u)\cap \Inv(N)$. Then $A\in\{\{0\}, I^{+}(u), I^{-}(u), N\}$.
\end{lem}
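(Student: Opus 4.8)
The plan is to diagonalise $A$ along the involution $u$ and then reduce to Lemma~\ref{lem2.7} applied on the two corners cut out by the idempotent $e\mydef\frac{u+1}{2}$. Recall from Lemma~\ref{lem2.4}\eqref{max.0} that $u=2e-1$, $I^+(u)=eN$ and $I^-(u)=(1-e)N$, and that $N=I^+(u)\oplus I^-(u)$ by Lemma~\ref{lem2.4}\eqref{max.1}. Since $u\in C(u)\cap\Inv(N)$, the hypothesis gives $uA=A$, so $ux\in A$ for every $x\in A$. As $ux=2ex-x$ and halving is available by Lemma~\ref{lem.vonN.2}\eqref{vonN0}, both $ex=\frac{x+ux}{2}$ and $(1-e)x=\frac{x-ux}{2}$ lie in $A$. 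Writing $A_+\mydef A\cap eN$ and $A_-\mydef A\cap(1-e)N$, this shows $A=A_+\oplus A_-$.

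Next I would identify each summand as an annihilator of the shape required by Lemma~\ref{lem2.7}. For $x\in eN$ one has $bx=b(ex)=(be)x$, so $A_+=\{x\in eN: b'x=0 \text{ for all } b'\in Be\}$ with $Be\subseteq Ne$; symmetrically $A_-=\{x\in(1-e)N: b'x=0\text{ for all }b'\in B(1-e)\}$ with $B(1-e)\subseteq N(1-e)$. Thus $A_+$ (resp. $A_-$) is exactly the object to which Lemma~\ref{lem2.7} applies, for the idempotent $e$ (resp. $1-e$), provided I can verify the corner-invariance hypothesis.

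The crux is transferring invariance from $C(u)\cap\Inv(N)$ down to $\Inv(eNe)$. Given $v\in\Inv(eNe)$, I would lift it to $\tilde v\mydef v+(1-e)$. A short computation (using $v=eve$) shows $\tilde v^2=1$ and $u\tilde v=\tilde v u=v-(1-e)$, so $\tilde v\in C(u)\cap\Inv(N)$ and hence $\tilde v A=A$. Since $\tilde v$ restricts to the identity on $(1-e)N$ and to left multiplication by $v$ on $eN$, it fixes $A_-$ pointwise and sends $A_+$ onto $vA_+\subseteq eN$; comparing the $eN$- and $(1-e)N$-components in $\tilde v A=A$ then forces $vA_+=A_+$. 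The same argument with $\tilde w\mydef e+w$ for $w\in\Inv((1-e)N(1-e))$ yields $wA_-=A_-$. This lifting, together with checking that $\tilde v$ respects the direct-sum decomposition, is where the main care is needed.

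Finally I would invoke Lemma~\ref{lem2.7}. If $A_+\neq\{0\}$ it gives $A_+=eN=I^+(u)$, and otherwise $A_+=\{0\}$; likewise $A_-\in\{\{0\},(1-e)N\}=\{\{0\},I^-(u)\}$. The four combinations of these alternatives produce exactly $A\in\{\{0\},I^+(u),I^-(u),N\}$, completing the proof.
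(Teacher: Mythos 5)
Your proof is correct and follows essentially the same route as the paper's: decompose $A=A^+\oplus A^-$ along $e=\frac{1+u}{2}$, lift each $v\in\Inv(eNe)$ to $v+(1-e)\in C(u)\cap\Inv(N)$ (and similarly for the other corner), replace $B$ by $Be$ resp.\ $B(1-e)$, and apply Lemma~\ref{lem2.7} to each summand. The only cosmetic difference is that you deduce $vA_+=A_+$ by comparing components of the direct sum, while the paper computes $v(eN\cap A)=w(eN\cap A)=weN\cap wA=eN\cap A$ directly; both are valid.
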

\begin{proof}
Define $A^\pm\mydef A\cap I^{\pm}(u)$ and $e\mydef \frac{1+u}{2}$. By Lemma~\ref{lem2.4}, $e\in I(N)$ and $eN=I^{+}(u)$.

Fix any $v\in \Inv(eNe)$. Define $w\mydef v+(1-e)$. Since $v^2=e$ and $v=eve$ we get $w^2=v^2+(1-e)^2=1$, so $w\in \Inv(N)$. Also, using $ew=we$, we get $uw=wu$ (recall $u=2e-1$), so $w\in C(u)\cap \Inv(N)$. Hence, by assumption $wA=A$. Notice, if $x\in eN$ then $x=ex$, so $w(eN\cap A)=we(eN\cap A)$. Now using $ew=v=we$, we get 
\begin{align*}
vA^+&=v(eN\cap A)=we(eN\cap A)=w(eN\cap A)=weN\cap wA\\
&=ewN\cap A=eN\cap A=A^+.
\end{align*}

Recalling $A=\{x\in N : bx=0 \ \text{for all} \ b\in B\}$ and $A^+=eN\cap A$ it follows that $A^+=\{x\in eN : bx=0 \ \text{for all} \ b\in B\}$. Moreover, we can replace $B$ by $Be$ without enlarging $A^+$: indeed if $x\in eN$ satisfies $bx=0$ for all $b\in Be$, then every $b'\in B$ satisfies $b'x=b'(ex)=(b'e)x=0$. Consequently,
$$A^+= \{x\in eN : bx=0 \ \text{for all} \ b\in Be\}.$$
Assuming $A^+\neq \{0\}$ (and using that $vA^+=A^+$ for each $v\in \Inv(eNe)$) we can now apply {\color{black}Lemma~\ref{lem2.7}} to $A^+$, $Be$ in place of $A,B$ to deduce $A^+=eN$. Including the trivial case we have $A^+\in\{\{0\}, I^+(u)\}$. 

In a similar fashion (using $I^-(u)=(1-e)N$, see Lemma~\ref{lem2.4}) we can prove that $vA^-=A^-$ for each $v\in \Inv((1-e)N(1-e))$ and use {\color{black}Lemma~\ref{lem2.7}} on $A^-$, $B(1-e)$ to deduce $A^-\in\{\{0\}, I^-(u)\}$. 

By Lemma~\ref{lem2.4}, $I^{+}(u)+I^{-}(u)=N$ and $I^{+}(u)\cap I^{-}(u)=\{0\}$. Hence $A^++A^-=A$ and $A^+\cap A^-=\{0\}$. If follows that $A\in\{\{0\}, I^{+}(u), I^{-}(u), N\}$.
\end{proof}

\begin{lem}\label{lem2.9}
Let $N$ be a von Neumann factor containing a subset $\phi$ of involutions satisfying \eqref{delta.aa}-\eqref{delta.dd} of Lemma~\ref{lem2.5} and $|\phi|>1$. Define $A\mydef I^+(\phi^2)$. Then $A=eN$ for some $e\in I(N)$ and $\phi\subseteq \Delta^+(e)$ or $\phi\subseteq \Delta^-(e)$.
\end{lem}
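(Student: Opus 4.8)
The plan is to reduce everything to the dichotomy supplied by Lemma~\ref{lem2.8}. First I would rewrite $A=I^+(\phi^2)$ as a left annihilator: since $tx=x$ for $t\in\phi^2$ means $(t-1)x=0$, we have $A=\{x\in N:bx=0\text{ for all }b\in B\}$ with $B\mydef\{t-1:t\in\phi^2\}$. Fix any $u_0\in\phi$ (possible as $|\phi|>1$); the setup is to verify the hypothesis of Lemma~\ref{lem2.8} for $u=u_0$, i.e. $vA=A$ for every $v\in C(u_0)\cap\Inv(N)$. I would obtain this in two moves: (i) $N(\phi)$ stabilises $A$ — for $g\in N(\phi)$ and $x\in A$, pushing $g$ leftward through a product $pq$ ($p,q\in\phi$) using $g\phi=\phi g$ rewrites $(pq)(gx)=g(p'q')x=gx$ with $p'q'\in\phi^2$, so $gA\subseteq A$ and hence $gA=A$ since $g^{-1}=g$; and (ii) $C(u_0)\cap\Inv(N)\subseteq N(\phi)$, which is immediate from \eqref{delta.cc}, as any involution commuting with $u_0\in\phi$ satisfies $u_0vu_0=v$. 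Lemma~\ref{lem2.8} then forces $A\in\{\{0\},I^+(u_0),I^-(u_0),N\}$, and since $I^{\pm}(u_0)=e_0N,(1-e_0)N$ for $e_0\mydef\frac{1+u_0}{2}$ by \eqref{max.0} (with $N=1N$ and $\{0\}=0N$), in every case $A=eN$ for an idempotent $e$.

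It remains to discard two options and read off the sign. Excluding $A=N$ is immediate: $1\in A$ would give $t=t\cdot1=1$ for all $t\in\phi^2$, i.e. $uv=1$ for all $u,v\in\phi$, i.e. $\phi=\{u_0\}$, contradicting $|\phi|>1$. For the rest I would record the pairwise content of \eqref{delta.dd}: writing $v=2f-1$, the relation $(u_0v-1)^2=0$ rearranges to $(e_0-f)^2=0$, equivalently $e_0fe_0=e_0$ and $(1-e_0)f(1-e_0)=0$, so $f=e_0+\beta_v+\gamma_v$ with $\beta_v\mydef e_0f(1-e_0)$ and $\gamma_v\mydef(1-e_0)fe_0$. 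Reducing $\phi^2$ to the products $u_0v$ (again via \eqref{delta.dd}, which makes each $u_0v$ of class $\le 2$) one gets $u_0v-1=2(\beta_v-\gamma_v)$, whose two corners annihilate independently, so $A=\{x:\beta_vx=0=\gamma_vx\text{ for all }v\}$. From this description, $e_0N\subseteq A\iff\gamma_v=0$ for all $v$, and $(1-e_0)N\subseteq A\iff\beta_v=0$ for all $v$. Hence if $A=I^+(u_0)=e_0N$ then every $\gamma_v$ vanishes, so $f=e_0+e_0f(1-e_0)$ and $fN=e_0N$ by \eqref{max.3}; that is $I^+(v)=e_0N=A$ for each $v\in\phi$, giving $\phi\subseteq\Delta^+(e_0)$. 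The case $A=I^-(u_0)=(1-e_0)N$ is symmetric and yields $\phi\subseteq\Delta^-(1-e_0)$.

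The hard part will be excluding $A=\{0\}$; note this must be excluded, since $A=\{0\}=0N$ would demand $\phi\subseteq\Delta^{\pm}(0)\subseteq\{\pm1\}$, impossible when $|\phi|>1$. By the previous paragraph, $A=\{0\}$ occurs exactly when some $\gamma_v\neq0$ and some $\beta_{v'}\neq0$, i.e. when $\phi$ contains elements of ``mixed type'' relative to $u_0$, and the crux is that the axioms forbid this. Applying \eqref{delta.dd} to $vv'\in\phi^2$, equivalently $(f_v-f_{v'})^2=0$, yields the rigid corner identities $\beta_v\gamma_{v'}+\beta_{v'}\gamma_v=0$ and $\gamma_v\beta_{v'}+\gamma_{v'}\beta_v=0$; I expect that forcing these to hold simultaneously across the whole family of involutions manufactured by the closure property \eqref{delta.aa} is what pins down that either all $\gamma_v=0$ or all $\beta_v=0$. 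This sign–homogeneity is exactly where the factor hypothesis (already invoked through Lemma~\ref{lem2.8}) must be combined with the closure axioms, and it is the main technical obstacle I anticipate; once it is in hand, the dichotomy of the second paragraph completes the proof.
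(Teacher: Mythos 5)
Your first two paragraphs are correct and track the paper's own proof closely: the annihilator description of $A$, the stability $gA=A$ for $g\in N(\phi)$, the inclusion $C(u_0)\cap \Inv(N)\subseteq N(\phi)$ via \eqref{delta.cc}, and the four-fold dichotomy from Lemma~\ref{lem2.8} are exactly the paper's Steps 3 and 4; your exclusion of $A=N$ is its Step 1. Your corner-decomposition treatment of the cases $A=I^{\pm}(u_0)$ is a legitimate variant of the paper's Step 5 (which instead picks $w\in\phi$ with $wvw=u_0$ via \eqref{delta.bb} and computes $I^+(v)=wI^+(u_0)=wA=A$). One caveat: the assertion that $(u_0v-1)^2=0$ ``rearranges to'' $(e_0-f)^2=0$ is true but is not a rearrangement; writing $\alpha\mydef e_0fe_0$, $\delta\mydef(1-e_0)f(1-e_0)$, the hypothesis together with $f^2=f$ only gives $2\alpha^2-3\alpha+e_0=0$ and $2\delta^2=\delta$, and one still has to kill the spurious roots (e.g.\ $q\mydef 2(e_0-\alpha)$ is an idempotent with $q\beta_v=0$ and $\beta_v\gamma_v=q/4$, forcing $q=0$); this step needs its own short proof.

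The genuine gap is your third paragraph: $A\neq\{0\}$ is never proved, only conjectured, and the ingredients you propose are demonstrably insufficient. The pairwise corner identities cannot force sign-homogeneity: in $N=M_4(\CC)$ take $e_0=E_{11}+E_{22}$, $u_0=2e_0-1$, $f=e_0+E_{13}+E_{42}$, $v=2f-1$; then $\phi=\{u_0,v\}$ has $|\phi|>1$, satisfies \eqref{delta.dd} (every element of $\phi^2$ is of class $\le 2$), hence all your symmetrized identities for the available pairs, and yet $\beta_v=E_{13}\neq 0\neq E_{42}=\gamma_v$, so it is ``mixed''; of course this $\phi$ violates \eqref{delta.aa} and \eqref{delta.cc}, which is consistent with the fact that here $A$ is none of the four sets of Lemma~\ref{lem2.8}. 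So the closure axioms must enter in an essential way, and the paper's Step 2 shows how — moreover it inverts your logical order by exhibiting a nonzero element of $A$ directly, rather than proving sign-homogeneity first. Namely: pick $t_0=1+n_0\in\phi^2$ with $t_0\neq 1$ (possible since $\phi^2=\{1\}$ would force $|\phi|\le 1$); by \eqref{delta.dd}, $n_0\neq 0$ and $n_0^2=0$. For any $t=1+n\in\phi^2$, property \eqref{delta.aa} gives $tt_0=t_0t\in\phi^2$, so \eqref{delta.dd} applied to $tt_0$ yields $0=(tt_0-1)^2=(n+n_0+nn_0)^2=2nn_0$, whence $nn_0=0$ and $tn_0=n_0$; thus $0\neq n_0\in A$, and Lemma~\ref{lem2.8} plus your second paragraph finish the proof. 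In your corner language this is precisely what you were missing: with $t=u_0v'$ and $t_0=u_0v$, the relation $nn_0=0$ reads $\beta_{v'}\gamma_v+\gamma_{v'}\beta_v=0$, and since $\beta_{v'}\gamma_v\in e_0Ne_0$ while $\gamma_{v'}\beta_v\in(1-e_0)N(1-e_0)$, each term vanishes separately — the unsymmetrized vanishing that pairwise identities alone cannot deliver. The commutation $tt_0=t_0t$ supplied by \eqref{delta.aa} is the key mechanism, not further identities of the same type.
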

\begin{proof}
The proof consist of five steps.

Step 1.~``$A\neq N$'': If $A=N$ then $1\in N=\{x\in N: tx=x \ \text{for all}\ t\in \phi^2\}$, so $t1=1$ for each $t\in \phi^2$. But $\phi^2=\{1\}$ implies $|\phi|\leq 1$ (if $uv=1\Rightarrow u=v$).

Step 2.~``$A\neq \{0\}$'': Clearly $1\in \phi^2$ using $u\in \phi \Rightarrow 1=u^2\in \phi^2$. Also $\phi^2\neq \{1\}$, see Step 1. One can therefore select $t_0\in \phi^2$ such that $t_0\neq 1$. By property Lemma~\ref{lem2.5}\eqref{delta.dd}, $t_0=1+n_0$ is of class 2, so $n_0\neq 0$ and $n_0^2=0$. Select any $t=1+n\in \phi^2$. Then $t=uv$, $t_0=u_0v_0$ for some $u,v,u_0,v_0\in \phi$. By property Lemma~\ref{lem2.5}\eqref{delta.aa}, $tt_0=uvu_0v_0=uv_0u_0v=u_0v_0uv=t_0t\in \phi^2$. Now property Lemma~\ref{lem2.5}\eqref{delta.dd} ensures $tt_0$ is of class 1 or 2, so $(tt_0-1)^2=0$. Using $n,n_0$ commute (recall $tt_0=t_0t$, $t=1+n$, $t_0=1+n_0$) gives
$$0=(tt_0-1)^2=((n+1)(n_0+1)-1)^2=(n+n_0+nn_0)^2=2nn_0.$$
We deduce that $tn_0=(1+n)n_0=n_0$ for all $t\in \phi^2$. Consequently, $n_0$ is an element of $\{x\in N: tx=x \ \text{for all}\ t\in \phi^2\}=A$.

Step 3.~``$sA=A$ for all $s\in N(\phi)$": Using $s\phi=\phi s$ we get 
\begin{align*}
sA&= \{sx\in N: tx=x \ \text{for all}\ t\in \phi^2\}\\
&= \{sx\in N: uvs^2x=s^2x \ \text{for all}\ u,v\in \phi\}\\
&= \{sx\in N: su'v'sx=s^2x \ \text{for all}\ u',v'\in \phi\}\\
&= \{sx\in N: u'v'sx=sx \ \text{for all}\ u',v'\in \phi\}=A.
\end{align*}

Step 4.~``$A=eN$ for some $e\in I(N)$'': Fix any $u\mydef 2f-1\in \phi$. If $s\in C(u)\cap \Inv(N)$ then we know $s\in \Inv(N)$ and $wsw=s$ for some $w\in \phi$. By property Lemma~\ref{lem2.5}\eqref{delta.cc}, $s\in N(\phi)$. Hence $sA=A$ by Step 3. Define $B\mydef \{b : 1+b\in \phi^2\}$. We get $A=\{x\in N : bx=0 \ \text{for all} \ b\in B\}$. Applying Lemma~\ref{lem2.8} to $u, B, A$ (recall $sA=A$ for all $s\in C(u)\cap \Inv(N)$) we get $A\in\{\{0\}, I^{+}(u), I^{-}(u), N\}$. By Lemma~\ref{lem2.4}, $A=I^+(u)=fN$ or $A=I^-(u)=(1-f)N$.

Step 5.~``$\phi\subseteq \Delta^+(e)$ or $\phi\subseteq \Delta^-(e)$''. Suppose $A=I^{+}(u)=fN$ with $u=2f-1$ from Step 4. Take any $v\in \phi$. By property Lemma~\ref{lem2.5}\eqref{delta.bb} there exsits $w\in \phi$ such that $wvw=u$. Since $w\in \phi$, property Lemma~\ref{lem2.5}\eqref{delta.cc} gives $w\in N(\phi)$ (because $w^\star ww^\star=w$ for some $w^\star\in \phi$). By Step 3, $wA=A$, so
\begin{align*}
I^+(v)&= \{wx\in N: vwx=wx\}=\{wx\in N : wux=wx\}\\
&=\{wx\in N : ux=x\}= wI^+(u)=wA=A.
\end{align*}
It follows that $v\in \Delta^+(f)$ from $I^+(v)=fN$. Hence $\phi\subseteq \Delta^+(f)$. Similarly, if $A=I^{-}(u)=(1-f)N$ with $u=2f-1$ from Step 4, then $\phi\subseteq \Delta^-(1-f)$.
\end{proof}

\begin{lem}\label{lem2.10}
Let $N$ be a von Neumann factor containing a nonzero idempotent e. Set $A\mydef eN$ and $\phi\mydef  \Delta^+(e)$. Then $A=I^+(\phi^2)$.
\end{lem}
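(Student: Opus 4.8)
The plan is to establish the two inclusions $eN \subseteq I^+(\phi^2)$ and $I^+(\phi^2) \subseteq eN$ separately. Write $u_0 \mydef 2e-1$; by Lemma~\ref{lem2.4}\eqref{max.0} we have $u_0 \in \Inv(N)$ with $I^+(u_0) = eN$, so $u_0 \in \phi$ and in particular $\phi \neq \emptyset$. Recall that, by the definition of $\Delta^+(e)$, every $v \in \phi$ satisfies $I^+(v) = eN$.

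For $eN \subseteq I^+(\phi^2)$ I would argue straight from the definitions, avoiding any computation of the elements of $\phi^2$. Fix $x \in eN$ and an arbitrary $t = uv \in \phi^2$ with $u,v \in \phi$. Since $x \in eN = I^+(v)$ we have $vx = x$, and since $x \in eN = I^+(u)$ we have $ux = x$; hence $tx = u(vx) = ux = x$. As $t$ was arbitrary, $x \in I^+(\phi^2)$.

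The reverse inclusion $I^+(\phi^2) \subseteq eN$ is the substantive direction, and the idea is to feed a one-parameter family of products into the hypothesis. Fix $x \in I^+(\phi^2)$. For each $y \in N$ put $f \mydef e + ey(1-e)$; by Lemma~\ref{lem2.4}\eqref{max.3} we have $f \in I(N)$ and $fN = eN$, so $v \mydef 2f-1 \in \phi$ and $t \mydef u_0 v \in \phi^2$. A short computation using $ef = f$ (immediate from the form of $f$) gives $t = (2e-1)(2f-1) = 1 + 2(f-e)$. Since $tx = x$ this forces $2(f-e)x = 0$, and as doubling is injective on $N$ (Lemma~\ref{lem.vonN.2}\eqref{vonN0}) we conclude $(f-e)x = ey(1-e)x = 0$.

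The last step is where the factor hypothesis is genuinely used, and is the point I expect to be the main obstacle. Letting $y$ range over $N$, the relation $ey(1-e)x = 0$ says exactly that $eN\,(1-e)x = \{0\}$. If $(1-e)x$ were nonzero, then Lemma~\ref{lem.vonN.2}\eqref{vonN.6}, applied to the two nonzero elements $e$ and $(1-e)x$, would yield $eN\,(1-e)x \neq \{0\}$, a contradiction. Hence $(1-e)x = 0$, i.e.\ $x = ex \in eN$, completing the reverse inclusion and the proof. The only delicate point is recognising which products to use: pairing the base involution $u_0$ with the involutions $2f-1$ arising from the parametrisation $f = e + ey(1-e)$ of idempotents generating $eN$ turns the membership $x \in I^+(\phi^2)$ into the single primeness-type condition $eN\,(1-e)x = \{0\}$, after which being a factor does the rest.
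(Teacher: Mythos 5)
Your proof is correct, and it takes a genuinely different --- and considerably more elementary --- route than the paper's. The paper deduces the lemma from its structural machinery: after disposing of the case $e=1$ and producing a second idempotent $f=e+ex(1-e)\neq e$ with $fN=eN$ to ensure $|\phi|>1$, it invokes Lemma~\ref{lem2.9} (which rests on the chain Lemmas~\ref{lem2.5}--\ref{lem2.8}) to conclude $I^+(\phi^2)=fN$ with $\phi\subseteq\Delta^+(f)$ or $\phi\subseteq\Delta^-(f)$, rules out the minus alternative via two distinct elements of $\phi$ and Lemma~\ref{lem2.4}\eqref{max.2}, and then identifies $fN=eN$. You instead prove the two inclusions directly: $eN\subseteq I^+(\phi^2)$ is immediate from the definition of $\Delta^+(e)$, and for the converse you test $x\in I^+(\phi^2)$ against the one-parameter family of products $t=(2e-1)(2f-1)=1+2(f-e)$ with $f=e+ey(1-e)$, which converts the hypothesis into $eN(1-e)x=\{0\}$; injectivity of doubling (Lemma~\ref{lem.vonN.2}\eqref{vonN0}) and the primeness property (Lemma~\ref{lem.vonN.2}\eqref{vonN.6}) then finish. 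All steps check out, including the edge case $e=1$, where $(1-e)x=0$ holds trivially and your argument goes through uniformly, whereas the paper treats $e=1$ separately. As for what each approach buys: in the paper's economy Lemma~\ref{lem2.9} must be proved anyway for Lemma~\ref{lem2.11} and Theorem~\ref{thmB}, so its derivation of this lemma is essentially free once that machinery exists; your argument is independent of that machinery (it uses only Lemma~\ref{lem2.4} and two standard factor properties), could be placed immediately after Lemma~\ref{lem.vonN.2}, and makes transparent that the statement holds in any unital $2$-torsion-free prime ring --- a point of interest for the paper's closing remark on extending Theorem~\ref{thmB} beyond von Neumann factors.
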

\begin{proof}
If $e=1$ then  $A=N$, $\phi=\Delta^+(1)=\{2f-1 : fN=N\}=\{1\}$ and $I^+(\phi^2)=I^+(\{1\})=\{x\in N: tx=x \ \text{for all}\ t\in \{1\}\}=N$, so $A=I^+(\phi^2)$. We may assume $1-e\neq 0$. Using property Lemma~\ref{lem.vonN.2}\eqref{vonN.6} select $x\in N$ such that $ex(1-e)\neq 0$. Then $f=e+ex(1-e)\in I(N)$ and $eN=fN$, see Lemma~\ref{lem2.4}\eqref{max.3}. Since $f\neq e$, $2f-1$ and $2e-1$ are distinct elements of $\phi$.

Since $\phi$ is a $\Delta^+$-set it satisfies properties \eqref{delta.aa}-\eqref{delta.dd} of Lemma~\ref{lem2.5}. Define $A'\mydef I^+(\phi^2)$. By Lemma~\ref{lem2.9}, $A'=fN$ for some $f\in I(N)$ and $\phi\subseteq \Delta^+(f)$ or $\phi\subseteq \Delta^-(f)$. 

Suppose $\phi\subseteq \Delta^-(f)$. Select distinct $u,v\in \phi$. Using $u,v\in \phi\subseteq \Delta^-(f)$ we get $I^-(u)=I^-(v)=fN$. Using $u,v\in \phi= \Delta^+(e)$ we get $I^+(u)=I^+(v)=eN$. By Lemma~\ref{lem2.4}\eqref{max.2}, $u=v$ giving a contradiction. Consequently $\phi\subseteq \Delta^+(f)$. Select $u\in \phi$. Using $u\in \phi\subseteq \Delta^+(f)$ we get $I^+(u)=fN$. Using $u\in \phi= \Delta^+(e)$ we get $I^+(u)=eN$. We conclude 
$$A=eN=I^+(u)=fN=A'=I^+(\phi^2).\qedhere$$
\end{proof}

\begin{lem}\label{lem2.11}
Let $N$ be a von Neumann factor. Then every $\Delta^+$-set is maximal among nonempty sets $\phi\subseteq \Inv(N)$ satisfying \eqref{delta.aa}-\eqref{delta.dd} of Lemma~\ref{lem2.5}.
\end{lem}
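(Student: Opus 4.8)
The plan is to deduce maximality from the structural dichotomy already packaged in Lemma~\ref{lem2.9}. Write $\phi=\Delta^+(e)$ for the given $\Delta^+$-set; by Lemma~\ref{lem2.5} it satisfies \eqref{delta.aa}--\eqref{delta.dd}, and it is nonempty since $2e-1\in\phi$, so it is a legitimate member of the collection over which we are claiming maximality. Let $\psi$ be any nonempty subset of $\Inv(N)$ satisfying \eqref{delta.aa}--\eqref{delta.dd} with $\phi\subseteq\psi$; the goal is to show $\psi\subseteq\phi$, hence $\psi=\phi$. I would split according to whether $|\phi|>1$ or $|\phi|=1$, the latter being exactly the cases $e\in\{0,1\}$.

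For the main case $|\phi|>1$, first note $|\psi|\geq|\phi|>1$, so Lemma~\ref{lem2.9} applies to $\psi$ and produces an idempotent $f$ with $\psi\subseteq\Delta^+(f)$ or $\psi\subseteq\Delta^-(f)$. I would rule out the second alternative: choosing distinct $u,v\in\phi$ (possible since $|\phi|>1$), membership in $\phi=\Delta^+(e)$ gives $I^+(u)=I^+(v)=eN$, while $\psi\subseteq\Delta^-(f)$ gives $I^-(u)=I^-(v)=fN$, so $u=v$ by Lemma~\ref{lem2.4}\eqref{max.2}, a contradiction. Hence $\psi\subseteq\Delta^+(f)$. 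Picking any $u\in\phi$, the identities $I^+(u)=eN$ (from $u\in\Delta^+(e)$) and $I^+(u)=fN$ (from $u\in\Delta^+(f)$) force $eN=fN$, whence $\Delta^+(f)=\Delta^+(e)=\phi$ and $\psi\subseteq\phi$, as desired.

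For the edge case $|\phi|=1$, I would first record that $\Delta^+(e)$ is a singleton exactly when $e\in\{0,1\}$: by Lemma~\ref{lem2.4}\eqref{max.3} the idempotents generating $eN$ are $\{e+ey(1-e):y\in N\}$, so $|\phi|=1$ iff $eN(1-e)=\{0\}$, which by Lemma~\ref{lem.vonN.2}\eqref{vonN.6} happens iff $e=0$ or $1-e=0$. In either case $2e-1=\pm1$ is central and $\phi=\{2e-1\}$. Given any $v\in\psi$, the product $(2e-1)v=\pm v$ lies in $\psi^2$ and is again an involution; by \eqref{delta.dd} it is of class $1$ or class $2$, and an involution $w$ with $(w-1)^2=0$ satisfies $2-2w=0$, i.e.\ $w=1$. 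Thus $(2e-1)v=1$, forcing $v=2e-1$, so $\psi=\{2e-1\}=\phi$.

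The substantive content has already been absorbed into Lemma~\ref{lem2.9} (and the structural Lemmas~\ref{lem2.6}--\ref{lem2.8} behind it), so the only genuine subtleties here are the two I have isolated: excluding the $\Delta^-(f)$ alternative, which is precisely where I must invoke two distinct elements and hence $|\phi|>1$; and the separate treatment of the singleton $\Delta^+$-sets, where Lemma~\ref{lem2.9} is unavailable and property \eqref{delta.dd} must be applied directly. I expect the singleton bookkeeping (identifying it with $e\in\{0,1\}$ and running the class-argument by hand) to be the main, though minor, obstacle.
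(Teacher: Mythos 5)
Your proof is correct, but it routes around the paper's argument in a genuinely different way, so a comparison is worthwhile. The paper argues by contradiction: assuming $\phi=\Delta^+(e)\subsetneq\phi'$, it automatically gets $|\phi'|>1$, applies Lemma~\ref{lem2.9} to $\phi'$, and then disposes of the alternative $\phi'\subseteq\Delta^-(f)$ by invoking Lemma~\ref{lem2.10} (the identity $eN=I^+(\phi^2)$ for $e\neq 0$) together with the monotonicity $I^+(\phi'^2)\subseteq I^+(\phi^2)$ and Lemma~\ref{lem2.4}\eqref{max.1}, forcing $f=0$ (and treating $e=0$ as a separate case to force $f=1$); either way $\phi'$ would be $\{1\}$ or $\{-1\}$, contradicting $|\phi'|>1$. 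You instead split on $|\phi|>1$ versus $|\phi|=1$: in the main case you kill the $\Delta^-(f)$ alternative immediately by choosing two distinct elements of $\phi$ and applying Lemma~\ref{lem2.4}\eqref{max.2} --- which is precisely the uniqueness trick the paper deploys \emph{inside the proof of Lemma~\ref{lem2.10}}, not in Lemma~\ref{lem2.11} itself --- so your argument bypasses Lemma~\ref{lem2.10} altogether; and in the singleton case (correctly identified with $e\in\{0,1\}$ via Lemma~\ref{lem2.4}\eqref{max.3} and Lemma~\ref{lem.vonN.2}\eqref{vonN.6}) you argue directly from property \eqref{delta.dd}: for $v\in\psi$ the involution $(2e-1)v=\pm v$ lies in $\psi^2$, hence $((2e-1)v-1)^2=0$, and an involution $w$ with $(w-1)^2=0$ satisfies $2-2w=0$, so $v=2e-1$. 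What each approach buys: yours is more economical, making the maximality statement independent of Lemma~\ref{lem2.10} (which the paper uses nowhere else), at the cost of a separate, though elementary, singleton analysis; the paper's contradiction framework treats all $e$ uniformly (since $|\phi'|>1$ comes for free from proper containment) and exploits the sharper intermediate fact $I^+(\phi^2)=eN$, which has some independent interest. Both proofs rest on the same engine, Lemma~\ref{lem2.9}, and both are complete.
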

\begin{proof}
Fix $\phi\mydef\Delta^+(e)$ for $e\in I(N)$. Then $\phi$ satisfies \eqref{delta.aa}-\eqref{delta.dd} of Lemma~\ref{lem2.5}. 

To verify maximality of $\phi$ select any $\phi'\subseteq \Inv(N)$ satisfying \eqref{delta.aa}-\eqref{delta.dd} (with $\phi'$ in place of $\phi$) of Lemma~\ref{lem2.5} and contaning $\phi$. Assume $\phi'\neq \phi$. We derive a contradiction. 

Since $\phi$ is nonempty and $\phi\subsetneq \phi'$ we get $|\phi'|>1$. Define $A'\mydef I^+(\phi'^2)$. By Lemma~\ref{lem2.9}, $A'=fN$ for some $f\in I(N)$ and $\phi'\subseteq \Delta^+(f)$ or $\phi'\subseteq \Delta^-(f)$. Suppose $\phi'\subseteq \Delta^+(f)$. Select $u\in \phi$. Using $u\in \phi\subseteq \phi' \subseteq\Delta^+(f)$ we get $I^+(u)=fN$. Using $u\in \phi= \Delta^+(e)$ we get $I^+(u)=eN$, so $fN=eN$ and $\phi'\subseteq \Delta^+(f)= \Delta^+(e)=\phi\subsetneq \phi'$. Consequently $\phi'\subseteq \Delta^-(f)$. Select $u\in \phi$. Using $u\in \phi\subseteq \phi' \subseteq\Delta^-(f)$ we get $I^-(u)=fN$. Using $u\in \phi= \Delta^+(e)$ we get $I^+(u)=eN$. We consider two cases:

Case 1. Suppose $e\neq 0$: By Lemma~\ref{lem2.10}, $eN=I^+(\phi^2)$. We also have $I^+(\phi'^2)\subseteq I^+(\phi^2)$ because if $x\in I^+(\phi'^2)$ then $tx=x$ for all $t\in \phi'^2$, hence $tx=x$ for all $t\in \phi^2$ ($\subseteq \phi'^2$), so $x\in I^+(\phi^2)$. It follows that 
$$I^-(u)=fN=A'= I^+(\phi'^2)\subseteq I^+(\phi^2)=eN=I^+(u).$$
By Lemma~\ref{lem2.4}\eqref{max.1}, $I^-(u)=\{0\}$, so $f=0$ and $\phi'\subseteq \Delta^-(f)=\Delta^-(0)=\{1\}$. But this contradicts $|\phi'|>1$.

Case 2. Suppose $e=0$: Then $I^+(u)=eN={0}$ and, using Lemma~\ref{lem2.4}\eqref{max.1}, $fN=I^-(u)=N$, so $f=1$ and $\phi'\subseteq \Delta^-(f)=\Delta^-(1)=\{-1\}$. This also contradicts $|\phi'|>1$.

We deduce that $\phi'= \phi$, so $\phi$ is maximal.
\end{proof}

\begin{prop}\label{prop2.12}
Let $N$ be a von Neumann factor. Then every $\Delta$-set is maximal among nonempty sets $\phi\subseteq \Inv(N)$ satisfying \eqref{delta.aa}-\eqref{delta.dd} of Lemma~\ref{lem2.5}.
\end{prop}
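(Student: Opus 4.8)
The plan is to reduce everything to Lemma~\ref{lem2.11}, which already settles the case of a $\Delta^+$-set, by exploiting the symmetry $\phi\mapsto-\phi$. A $\Delta$-set is by definition either a $\Delta^+$-set or a $\Delta^-$-set, and since $\Delta^-(e)=-\Delta^+(e)$, negating a $\Delta^-$-set produces a $\Delta^+$-set. The only genuinely new input I need is that the map $\psi\mapsto-\psi$ carries the collection of nonempty subsets of $\Inv(N)$ satisfying \eqref{delta.aa}-\eqref{delta.dd} bijectively onto itself, while preserving inclusion. Granting this, maximality for $\Delta^-$-sets is formally inherited from maximality for $\Delta^+$-sets.

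First I would record that negation sends involutions to involutions, since $(-u)^2=u^2=1$, and is an inclusion-preserving bijection of the power set of $\Inv(N)$. Then I would verify that each of the four properties is invariant under $\psi\mapsto-\psi$. For \eqref{delta.aa}, if $u,v,w\in\phi$ then $(-u)(-v)(-w)=-(uvw)$ and $(-w)(-u)(-v)=-(wuv)$, so the identity $uvw=wuv\in\phi$ transports to $(-u)(-v)(-w)=(-w)(-u)(-v)\in-\phi$. For \eqref{delta.bb}, the equation $(-w)(-v)(-w)=-u$ is simply $wvw=u$, so both existence and uniqueness of $w$ transfer verbatim. For \eqref{delta.dd}, one checks directly that $(-\phi)^2=\phi^2$, so the condition on $\phi^2$ is literally the same for $\phi$ and $-\phi$.

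The only point requiring care is \eqref{delta.cc}, since it is phrased through the normaliser $N(\phi)$ and an internal relation. Here I would first note $N(-\phi)=N(\phi)$, as $v(-\phi)=(-\phi)v$ is equivalent to $v\phi=\phi v$; then, writing $w'=-w\in-\phi$, the relation $w'uw'=u$ collapses to $wuw=u$ after the two sign changes cancel. Hence the biconditional in \eqref{delta.cc} for $-\phi$ coincides exactly with the one for $\phi$. With the four invariances in hand, the proposition follows at once: if $\phi=\Delta^+(e)$ it is maximal by Lemma~\ref{lem2.11}; if $\phi=\Delta^-(e)=-\Delta^+(e)$, then $-\phi=\Delta^+(e)$ is maximal by Lemma~\ref{lem2.11}, and for any nonempty $\phi'$ satisfying \eqref{delta.aa}-\eqref{delta.dd} with $\phi\subseteq\phi'$ we have $-\phi\subseteq-\phi'$ with $-\phi'$ again satisfying \eqref{delta.aa}-\eqref{delta.dd}, so maximality of $-\phi$ forces $-\phi=-\phi'$ and hence $\phi=\phi'$.

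The main obstacle is purely bookkeeping rather than conceptual: one must track the sign changes in \eqref{delta.cc}, where elements of $\phi$ appear on the outside both in the normaliser and in the relation $wuw=u$, to confirm that they cancel. Once these invariances are checked, no further structure of $N$ is invoked---the statement is a formal consequence of Lemma~\ref{lem2.11} together with the order-preserving involution $\phi\mapsto-\phi$ on the poset of sets satisfying \eqref{delta.aa}-\eqref{delta.dd}.
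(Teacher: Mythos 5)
Your proof is correct, but it takes a genuinely different route from the paper's. The paper also dispatches the $\Delta^+$ case via Lemma~\ref{lem2.11} and notes that the properties \eqref{delta.aa}--\eqref{delta.dd} are sign-independent, but for maximality of a $\Delta^-$-set it goes back \emph{inside} the proof of Lemma~\ref{lem2.11} and re-runs it with signs adjusted: one follows the same steps until $A'=fN$ with $\phi'\subseteq\Delta^+(f)$ or $\phi'\subseteq\Delta^-(f)$, and then redoes the two cases with $I^+$ and $I^-$ interchanged to force the contradiction $|\phi'|\leq 1$. You instead treat Lemma~\ref{lem2.11} as a black box and transport its conclusion through the inclusion-preserving involution $\psi\mapsto-\psi$ on the poset of nonempty subsets of $\Inv(N)$ satisfying \eqref{delta.aa}--\eqref{delta.dd}: since $\Delta^-(e)=-\Delta^+(e)$, any competitor $\phi'\supseteq\Delta^-(e)$ yields $-\phi'\supseteq\Delta^+(e)$, and maximality of $\Delta^+(e)$ forces equality. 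Your sign-invariance checks are the right ones and are all correct --- in particular the two delicate points, $(-\phi)^2=\phi^2$ for \eqref{delta.dd} and $N(-\phi)=N(\phi)$ together with the cancellation $(-w)u(-w)=wuw$ for \eqref{delta.cc}. What your approach buys is that the internal case analysis of Lemma~\ref{lem2.11} is done exactly once, and the $\Delta^-$ case becomes purely formal; the cost is that the sign-invariance of all four properties, which the paper asserts in one sentence (and uses only to see that $\Delta^-$-sets \emph{satisfy} the properties, not for maximality), must be verified for an arbitrary set $\phi'$, not just for $\Delta$-sets --- which is precisely the bookkeeping you carried out.
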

\begin{proof}
Lemma~\ref{lem2.5} and Lemma~\ref{lem2.11} provides the desired result for $\Delta^+$-sets, so we only need to consider  $\Delta^-$-sets. Notice that any $\Delta^-$-set $\phi$ satisfies the properties \eqref{delta.aa}-\eqref{delta.dd} of Lemma~\ref{lem2.5} because the properties are independent of the sign of $\phi$.

Maximality of a $\Delta^-$-set, say $\phi\mydef\Delta^-(e)$, can be verified by modifying the proof of Lemma~\ref{lem2.11}: The proof is unchanged until reaching $A'=fN$ for $f\in I(N)$ and $\phi'\subseteq \Delta^+(f)$ or $\phi'\subseteq \Delta^-(f)$. Then, if $u\in \phi\subseteq \phi' \subseteq\Delta^-(f)$ one shows $fN=I^-(u)=eN$ and $\phi'\subsetneq \phi'$. Moreover, if $u\in \phi\subseteq \phi'\subseteq \Delta^+(f)$, one shows that $fN=I^+(u)\subseteq I^-(u)$ and $f=0$ (in Case 1) or $I^-(u)=eN={0}$, $I^+(u)=fN$ and $f=1$ (in Case 2). This implies $|\phi'|\leq 1$, but $|\phi'|>1$. 
\end{proof}

\smallskip
\emph{Proof of Theorem~\ref{thmB}:} Let $\phi$ be a nonempty subset of $\Inv(N)$. If $\phi$ is a $\Delta^+$-set or a $\Delta^-$-set, then $\phi$ is a maximal set among the nonempty subset of involutions in $N$ satisfying \eqref{delta.a}-\eqref{delta.d} of Lemma~\ref{lem2.5}, see Proposition~\ref{prop2.12}.

Conversely, assume $\phi$ is a maximal set among the nonempty subset of involutions in $N$ satisfying \eqref{delta.a}-\eqref{delta.d} of Lemma~\ref{lem2.5}. We show $\phi$ is a $\Delta$-set.

Suppose $|\phi|>1$. Define $A\mydef I^+(\phi^2)$. By Lemma~\ref{lem2.9}, $A=fN$ for some $f\in I(N)$ and $\phi\subseteq \Delta^+(f)$ or $\phi\subseteq \Delta^-(f)$. By maximality of $\phi$ we get that $\phi= \Delta^+(f)$ or $\phi= \Delta^-(f)$, so $\phi$ is a $\Delta$-set.

Suppose $|\phi|=1$. If $\phi=\{1\}$ or $\phi=\{-1\}$, then $\phi$ is a $\Delta$-set because $\Delta^+(1)=\{1\}$ and $\Delta^-(1)=\{-1\}$. We claim no other option is possible. To see this assume $\phi=\{u\}$ for $u\mydef 2e-1$ such that $u\neq 1$ and $u\neq -1$. Using that $I^+(u)=eN$ (Lemma~\ref{lem2.4}\eqref{max.0}) we get that $\phi=\{u\}\subseteq \Delta^+(e)$. By maximality of $\phi$, $\{u\}=\phi=\Delta^+(e)$. Since $u$ differs from $1,-1$, both $e$ and $1-e$ are nonzero. Using property Lemma~\ref{lem.vonN.2}\eqref{vonN.6} select $x\in N$ such that $ex(1-e)\neq 0$. If follows that $f=e+ex(1-e)\in I(N)$ and $fN=eN$, see Lemma~\ref{lem2.4}\eqref{max.3}. Therefore $u\neq 2f-1\in \Delta^+(e)=\{u\}$. Contradiction.
\qed

\begin{remark}
It is unclear to which extend the characterisation in Theorem~\ref{thmB} remains valid for unital rings $N$ which are neither von Neumann factors nor regular rings. We suspect that this should be true for any unital rings $N$ satisfying properties \eqref{vonN1}-\eqref{vonN7} of Lemma~\ref{lem.vonN} and the following additional property
\begin{enumerate}[\emph{(8)}]
\item\label{vonN8} \emph{Suppose $e\in I(N)$, $B\subseteq Ne$ and $A\mydef \{x\in eN : bx=0 \ \text{for all} \ b\in B\}$ satisfy $A\neq \{0\}$ and $vA=A$ for all $v\in \Inv(eNe)$. Then $A=eN$.}
\end{enumerate}
\end{remark}

\section{Idempotents and the proof of Theorem~\ref{thmD}.}

In this section we prove Theorem~\ref{thmD}. First we establish some notation. Given two von Neumann factors $N$ and $M$, and a group isomorphism $\varphi\colon GL(N)\to GL(M)$ between their general linear groups, the formula
\begin{align*}
1-2\theta(e)=\varphi(1-2e), \ \ \ e\in I(N),
\end{align*}
induces a bijection $\theta\colon I(N)\to I(B)$ between the set of idempotents of $N$ and $M$. To simplify notation we record the following:

\begin{ntn}\label{note}
(i) The quadruple $(N,M,\varphi,\theta)$ will denote a pair of von Neumann factors $N$ and $M$, a group isomorphism $\varphi\colon GL(N) \to GL(M)$, and the induced bijection $\theta \colon I(N)\to I(M)$ where $1-2\theta(e)=\varphi(1-2e)$.

(ii) With $(N,M,\varphi,\theta)$ as above and $e\in I(N)$ we write $\Delta^+(e)\to \Delta^+$ (resp.~$\Delta^+(e)\to \Delta^-$) to indicate that $\varphi$ maps the $\Delta^+$-set $\Delta^+(e)$ in $GL(N)$ into a $\Delta^+$-set (resp.~$\Delta^-$-set) in $GL(M)$. 

(iii) We implicitly consider the quadruple $(M, N, \varphi^{-1},\theta^{-1})$. We will also write $\Delta^+(f)\to \Delta^+$ and $\Delta^+(f)\to \Delta^-$ for $f\in I(M)$ to indicate what $\varphi^{-1}$ does to the $\Delta^+$-set $\Delta^+(f)$. 
\end{ntn}

\begin{remarks}\label{rem3.2}
(i) Let $(N,M,\varphi,\theta)$ be as in (\ref{note}). By Theorem~\ref{thmB} the bijection $\varphi$ maps each $\Delta$-set into a  $\Delta$-set. In particular for $e\in I(N)$ 
\begin{align*}
\Delta^+(e)\to \Delta^+ \ \ \ \text{or} \ \ \ \Delta^+(e)\to \Delta^-.
\end{align*}
(ii) No $\Delta$-set except for $\{1\}$ and $\{-1\}$ is both a $\Delta^+$-set and $\Delta^-$-set: To see this assume
$$\phi=\Delta^+(e)=\Delta^-(f),$$ 
for some $e,f\in I(N)$. For $u\mydef 2e-1$ and $v\mydef 2f-1$ we have that $u,-v\in \phi$, so $I^-(u)=fN$ and $I^+(-v)=eN$. Using Lemma~\ref{lem2.4}\eqref{max.0}, $I^-(-v)=fN$ and $I^+(u)=eN$. Hence $u=-v$ and $e=1-f$ (see Lemma~\ref{lem2.4}\eqref{max.2}). We have $\Delta^+(e)\cap \Delta^-(1-e)=\{u\}$ (because if $w\mydef 2g-1\in \Delta^+(e)\cap \Delta^-(1-e)$ then $I^\pm(w)=I^\pm(u)$, so $w=u$), so $\phi=\{u\}$. The proof of Theorem~B (last paragraph) gives that $u$ is $1$ or $-1$, as claimed. In particular for nontrivial $e\in I(N)$
\begin{align*}
\text{either} \ \ \ \Delta^+(e)\to \Delta^+ \ \ \ \text{or} \ \ \ \Delta^+(e)\to \Delta^-.
\end{align*}
(iii)  $(N,M,\varphi,\theta)$ be as in (\ref{note}). Then $\varphi(-1)=-1$, see \cite{MR3552377}.
\end{remarks}

We now establish what is the image of each $\Delta^+$-set via the bijection $\varphi$. Using $\varphi(-1)=-1$ this also gives the image of each $\Delta^-$-set.

\begin{lem}\label{lem3.3}
Let $(N,M,\varphi,\theta)$ be as in (\ref{note}). Then for each $e\in I(N)$
\begin{align*}
\varphi (\Delta^+(e))=\left\{
\begin{array}{ll}
\Delta^+(\theta(e)),&\mbox{ if } e\neq 0,1 \mbox{ and }\Delta^+(e)\to \Delta^+\\
\Delta^-(\theta(1-e)),&\mbox{ if } e\neq 0,1 \mbox{ and }\Delta^+(e)\to \Delta^-\\
\Delta^+(1),&\mbox{ if } e =1\\
\Delta^+(0),&\mbox{ if } e =0
\end{array}\right.
\end{align*}
\end{lem}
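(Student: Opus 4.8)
The plan is to reduce the entire statement to the single identity $\varphi(2e-1)=2\theta(e)-1$, which records how $\varphi$ transports the canonical involution attached to $e$. First I would establish this identity: from the defining relation $1-2\theta(e)=\varphi(1-2e)$, the factorisation $1-2e=(-1)(2e-1)$, and $\varphi(-1)=-1$ (Remark~\ref{rem3.2}(iii)), one computes $1-2\theta(e)=\varphi(-(2e-1))=-\varphi(2e-1)$, whence $\varphi(2e-1)=2\theta(e)-1$. Running the same computation with $1-e$ in place of $e$ (equivalently, using $2(1-e)-1=-(2e-1)$) yields the companion identity $\theta(1-e)=1-\theta(e)$, which I will need in the $\Delta^-$ case. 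These two identities are the only genuinely computational ingredients.

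Next I would dispose of the two trivial cases directly. For $e=1$ we have $\Delta^+(1)=\{v\in\Inv(N):I^+(v)=N\}=\{1\}$ and $\varphi(1)=1$, so $\varphi(\Delta^+(1))=\{1\}=\Delta^+(1)$; for $e=0$ we have $\Delta^+(0)=\{-1\}$ and $\varphi(-1)=-1$, so $\varphi(\Delta^+(0))=\{-1\}=\Delta^+(0)$. These match the last two lines of the displayed formula.

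For the generic case $e\neq 0,1$ I would invoke Remark~\ref{rem3.2}(i)--(ii): the set $\varphi(\Delta^+(e))$ is again a $\Delta$-set, and exactly one of $\Delta^+(e)\to\Delta^+$ or $\Delta^+(e)\to\Delta^-$ holds. In the case $\Delta^+(e)\to\Delta^+$ write $\varphi(\Delta^+(e))=\Delta^+(f)$ for some $f\in I(M)$. Since $2e-1\in\Delta^+(e)$ by Lemma~\ref{lem2.4}\eqref{max.0}, its image $2\theta(e)-1$ lies in $\Delta^+(f)$, so $I^+(2\theta(e)-1)=fN$; but $I^+(2\theta(e)-1)=\theta(e)N$ again by Lemma~\ref{lem2.4}\eqref{max.0}, so $\theta(e)N=fN$ and hence $\Delta^+(f)=\Delta^+(\theta(e))$, because $\Delta^+(\cdot)$ depends only on the associated right ideal. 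In the case $\Delta^+(e)\to\Delta^-$ write $\varphi(\Delta^+(e))=\Delta^-(g)$; since $\Delta^-(g)=\{w\in\Inv(M):I^-(w)=gN\}$, the membership $2\theta(e)-1\in\Delta^-(g)$ gives $I^-(2\theta(e)-1)=gN$, which by Lemma~\ref{lem2.4}\eqref{max.0} equals $(1-\theta(e))N=\theta(1-e)N$, so $\Delta^-(g)=\Delta^-(\theta(1-e))$. Together with the trivial cases this produces all four lines.

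The matching of right ideals through Lemma~\ref{lem2.4} is routine once the identity $\varphi(2e-1)=2\theta(e)-1$ is available, so the real content is pushed into the earlier results. The point to argue with care is that $\varphi(\Delta^+(e))$ is genuinely a $\Delta$-set of a \emph{definite} sign: this rests on $\varphi$ carrying the purely group-theoretic conditions \eqref{delta.aa}--\eqref{delta.dd} both forward and backward — in particular the class-$2$ condition \eqref{delta.dd}, whose group-theoretic description is exactly Theorem~\ref{thmC} — so that maximal configurations correspond to maximal configurations and Theorem~\ref{thmB} applies. With Remark~\ref{rem3.2}(ii) guaranteeing that the two sign-cases are mutually exclusive for nontrivial $e$, the above case analysis is exhaustive and non-overlapping, and I expect this verification (rather than any calculation) to be the only subtle step.
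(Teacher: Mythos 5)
Your proposal is correct and takes essentially the same route as the paper's own proof: handle $e=0,1$ via $\varphi(-1)=-1$, invoke Remark~\ref{rem3.2}(ii) for the sign dichotomy, and identify the target $\Delta$-set by tracking $2e-1\in\Delta^+(e)$ through $\varphi$ and matching right ideals with Lemma~\ref{lem2.4}\eqref{max.0}. The only differences are cosmetic: you spell out the identities $\varphi(2e-1)=2\theta(e)-1$ and $\theta(1-e)=1-\theta(e)$, which the paper uses implicitly, and you occasionally write $fN$, $\theta(e)N$ where the ideals live in $M$ (a harmless slip).
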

\begin{proof}
Fix any $e\in I(N)$. Since $\varphi(-1)=-1$, $\varphi (\Delta^+(1))=\varphi(1)=1=\Delta^+(1)$ and $\varphi (\Delta^+(0))=\varphi(-1)=-1=\Delta^+(0)$. We may therefore assume that $e$ is nontrivial. By Remark~\ref{rem3.2}(ii) either $\Delta^+(e)\to \Delta^+$ or $\Delta^+(e)\to \Delta^-$. 

Suppose $\Delta^+(e)\to \Delta^+$. Select $g\in I(M)$ such that $\varphi(\Delta^+(e))=\Delta^+(g)$. By Lemma~\ref{lem2.4}\eqref{max.0}, $I^+(2e-1)=eN$, so $2e-1\in \Delta^+(e)$. Hence $\varphi(2e-1)\in \Delta^+(g)$. It follows that $2\theta(e)-1 \in \Delta^+(g)$, so $\theta(e)M=I^+(2\theta(e)-1)=gM$. We deduce $\varphi(\Delta^+(e))=\Delta^+(g)= \{v\in \Inv(M) : I^{+}(v)=gM\}=\Delta^+(\theta(e))$.

Suppose $\Delta^+(e)\to \Delta^-$. Select $g\in I(M)$ such that $\varphi(\Delta^+(e))=\Delta^-(g)$. By Lemma~\ref{lem2.4}\eqref{max.0}, $2\theta(e)-1\in \Delta^-(g)$, so $\theta(1-e)M=I^-(2\theta(e)-1)=gM$. So $\varphi(\Delta^+(e))=\Delta^-(g)= \{v\in \Inv(M) : I^{-}(v)=gM\}=\Delta^-(\theta(1-e))$.
\end{proof}

\begin{lem}\label{lem3.4}
Let $(N,M,\varphi,\theta)$ be as in (\ref{note}). Let $e\in N$ be a nontrivial idempotent and $f\mydef \theta(e)$ its image via $\theta$. Then the following holds:
\begin{enumerate}
\item\label{delta.cap} We have $\Delta^+(e)\cap \Delta^-(1-e)=\{2e-1\}$.
\item\label{delta.plus} If $\Delta^+(e)\to \Delta^+$, then $\Delta^+(f)\to \Delta^+$.
\item\label{delta.minus} If $\Delta^+(e)\to \Delta^-$, then $\Delta^+(1-e), \Delta^+(1-f), \Delta^+(f)\to \Delta^-$.
\end{enumerate}
\end{lem}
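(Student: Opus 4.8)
The plan is to prove the three parts in order, exploiting the duality built into the setup. For part \eqref{delta.cap}, I would argue directly: if $w=2g-1\in \Delta^+(e)\cap \Delta^-(1-e)$, then $I^+(w)=eN$ and $I^-(w)=(1-e)N$. But by Lemma~\ref{lem2.4}\eqref{max.0} the involution $u\mydef 2e-1$ satisfies exactly $I^+(u)=eN$ and $I^-(u)=(1-e)N$, so $w$ and $u$ have identical $I^{\pm}$-sets. Lemma~\ref{lem2.4}\eqref{max.2} then forces $w=u=2e-1$, and the reverse inclusion is immediate since $u$ lies in both sets. This part is routine once one invokes the uniqueness in \eqref{max.2}.

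For part \eqref{delta.plus}, the idea is that $\Delta^+(e)\to \Delta^+$ combined with Lemma~\ref{lem3.3} gives $\varphi(\Delta^+(e))=\Delta^+(\theta(e))=\Delta^+(f)$. To determine what $\varphi^{-1}$ does to $\Delta^+(f)$ I would simply read this equation backwards: $\varphi^{-1}(\Delta^+(f))=\Delta^+(e)$, which is a $\Delta^+$-set, so by the definition in Notation~\ref{note}(iii) we have $\Delta^+(f)\to \Delta^+$ for the quadruple $(M,N,\varphi^{-1},\theta^{-1})$. The only subtlety is bookkeeping: one must be careful that $f=\theta(e)$ is genuinely nontrivial (which holds since $\theta$ is a bijection sending trivial idempotents to trivial idempotents, as recorded in Lemma~\ref{lem3.3}) so that Remark~\ref{rem3.2}(ii) applies and the dichotomy $\Delta^+(f)\to\Delta^+$ or $\Delta^+(f)\to\Delta^-$ is meaningful.

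For part \eqref{delta.minus}, I expect the main obstacle: here I must track four different $\Delta$-sets. Assuming $\Delta^+(e)\to \Delta^-$, Lemma~\ref{lem3.3} gives $\varphi(\Delta^+(e))=\Delta^-(\theta(1-e))$. First I would show $\Delta^+(1-e)\to \Delta^-$. The natural route is to relate $\Delta^+(1-e)$ to $\Delta^-(e)$ via the sign flip: since $\varphi(-1)=-1$ (Remark~\ref{rem3.2}(iii)), $\varphi$ commutes with negation, and $\Delta^-(e)=-\Delta^+(e)$ by definition, so $\varphi(\Delta^-(e))=-\varphi(\Delta^+(e))=-\Delta^-(\theta(1-e))=\Delta^+(\theta(1-e))$. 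The task is then to connect $\Delta^+(1-e)$ to these sets; the cleanest identification uses part \eqref{delta.cap}, namely that the unique common involution of $\Delta^+(e)$ and $\Delta^-(1-e)$ is $2e-1$, which pins down how the complementary idempotent behaves under $\varphi$. Once $\Delta^+(1-e)\to\Delta^-$ is in hand, I would feed the relevant idempotents into Lemma~\ref{lem3.3} and apply part \eqref{delta.plus} to the inverse quadruple $(M,N,\varphi^{-1},\theta^{-1})$ to cascade the conclusion through $\Delta^+(1-f)$ and $\Delta^+(f)$.

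The delicate point throughout is keeping the sign conventions and the direction of $\varphi$ versus $\varphi^{-1}$ straight, since each application of Lemma~\ref{lem3.3} produces a $\theta$ or $\theta^{-1}$ of a possibly complemented idempotent. I would therefore set up explicit notation at the start of part \eqref{delta.minus} — writing out $I^{\pm}$-sets of the chosen involutions $2e-1$, $2f-1$, and their negatives — and verify each transition by comparing $I^+$-sets via Lemma~\ref{lem2.4}, rather than manipulating the $\Delta$-sets abstractly. The chain of four conclusions in \eqref{delta.minus} should then follow by repeated, careful application of the dichotomy in Remark~\ref{rem3.2}(ii) together with Lemma~\ref{lem3.3}.
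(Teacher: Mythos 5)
Parts \eqref{delta.cap} and \eqref{delta.plus} of your proposal are correct and are exactly the paper's argument: \eqref{delta.cap} follows from Lemma~\ref{lem2.4}\eqref{max.0} together with the uniqueness statement Lemma~\ref{lem2.4}\eqref{max.2}, and \eqref{delta.plus} is Lemma~\ref{lem3.3} read backwards through Notation~\ref{note}(iii).

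In part \eqref{delta.minus}, however, there is a genuine gap, located exactly where you wave at an ``identification'': you never prove $\Delta^+(1-e)\to\Delta^-$, you only assert that part \eqref{delta.cap} ``pins down how the complementary idempotent behaves under $\varphi$''. The missing argument is a proof by contradiction, and it needs an ingredient your proposal never mentions. Assume $\Delta^+(1-e)\to\Delta^+$. Then Lemma~\ref{lem3.3} gives $\varphi(\Delta^+(1-e))=\Delta^+(\theta(1-e))$; combined with your sign-flip identity $\varphi(\Delta^-(e))=\Delta^+(\theta(1-e))$ and the injectivity of $\varphi$, this forces $\Delta^+(1-e)=\Delta^-(e)$, equivalently $\Delta^+(e)=\Delta^-(1-e)$. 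Now part \eqref{delta.cap} collapses $\Delta^+(e)=\Delta^+(e)\cap\Delta^-(1-e)$ to the singleton $\{2e-1\}$. This contradicts nothing unless one also knows that $|\Delta^+(g)|>1$ for every nontrivial idempotent $g$ --- the fact the paper extracts from Lemma~\ref{lem.vonN.2}\eqref{vonN.6} (choose $x$ with $gx(1-g)\neq 0$) and Lemma~\ref{lem2.4}\eqref{max.3} (then $g+gx(1-g)$ is a second idempotent generating $gN$). Your proposal contains neither the reduction to a singleton nor this cardinality fact, so the dichotomy of Remark~\ref{rem3.2}(ii) is never actually resolved in the direction you need. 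Once this step is supplied, the remainder of your plan does go through: cascading via the contrapositive of part \eqref{delta.plus} applied to the inverse quadruple $(M,N,\varphi^{-1},\theta^{-1})$, together with $\theta(1-e)=1-f$ and Remark~\ref{rem3.2}(ii), yields $\Delta^+(f)\to\Delta^-$ and $\Delta^+(1-f)\to\Delta^-$; this is a mild repackaging of the paper's direct computations $\varphi(\Delta^-(e))=\Delta^+(1-f)$ and $\varphi(\Delta^-(1-e))=\Delta^+(f)$.
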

\begin{proof}
``\eqref{delta.cap}'': Fix any $w\mydef 2g-1\in\Inv(N)$. Set $u\mydef 2e-1$. Using Lemma~\ref{lem2.4}\eqref{max.0} we obtain $I^+(u)=eN$ and $I^-(u)=(1-e)N$. Now take any $w\in \Delta^+(e)\cap \Delta^-(1-e)$. Then $I^+(w)=eN$ and $I^-(u)=(1-e)N$, so $I^\pm(u)=I^\pm(w)$. By Lemma~\ref{lem2.4}\eqref{max.2}, $w=u$, so $g=e$. We conclude $\Delta^+(e)\cap \Delta^-(1-e)=\{u\}$.

``\eqref{delta.plus}'': Suppose $\Delta^+(e)\to \Delta^+$. By Lemma~\ref{lem3.3}, $\varphi(\Delta^+(e))=\Delta^+(\theta(e))$. Hence $\varphi^{-1}(\Delta^+(f))=\varphi^{-1}(\Delta^+(\theta(e))=\Delta^+(e)$, so $\Delta^+(f)\to \Delta^+$.

``\eqref{delta.minus}'': Suppose $\Delta^+(e)\to \Delta^-$. Assume that $\Delta^+(1-e)\to \Delta^+$, we derive a contradiction. By Lemma~\ref{lem3.3}, we have $\varphi(\Delta^+(e))=\Delta^-(\theta(1-e))$ and $\varphi(\Delta^+(1-e))=\Delta^+(\theta(1-e))$. Using $\varphi(-1)=-1$ if follows that
$$\varphi(\Delta^+(e))=\varphi(-1)\varphi(\Delta^+(1-e))=\varphi(\Delta^-(1-e)).$$
By \eqref{delta.cap} we get $\{2e-1\}\subseteq\Delta^+(e)\subseteq \Delta^+(e)\cap \Delta^-(1-e)=\{2e-1\}$. But we know $|\Delta^+(e)|>1$ for nontrivial $e$ (see Lemma~\ref{lem.vonN.2}\eqref{vonN.6} and Lemma~\ref{lem2.4}\eqref{max.3}). Contradiction. Therefore $\Delta^+(1-e)\to \Delta^-$, cf.~Remark~\ref{rem3.2}(ii).

Since $\Delta^+(e)\to \Delta^-$, we get that $\Delta^-(e)\to \Delta^+$ and $\Delta^-(1-e)\to \Delta^+$ (by the preceding paragraph and by $\varphi(-1)=-1$). Using that $\Delta^-(e)\to \Delta^+$, $\varphi(\Delta^-(e))=\Delta^+(\theta(1-e))=\Delta^+(1-f)$, so $\Delta^+(1-f)\to \Delta^-$. Using that $\Delta^-(1-e)\to \Delta^+$, $\varphi(\Delta^-(1-e))=\Delta^+(\theta(1-(1-e)))=\Delta^+(f)$, so $\Delta^+(f)\to \Delta^-$.
\end{proof}

Recall we write $e\eqv_N f$ (or $e\eqv f$)  whenever $e, f\in I(N)$ and $eN=fN$. This is an equivalence relation on $I(N)$. Let $[e]$ denote the equivalence class containing $e$ and $I(N)/\smallspace\eqv$ the set of equivalence classes, i.e.,
\begin{align*}
[e]\mydef\{f \in I(N): f\eqv e\}, \ \ \ I(N)/\smallspace\eqv\ \mydef \{[e] : e\in I(N)\}. 
\end{align*}

\smallskip
\emph{Proof of Theorem~\ref{thmD}:} Let $(N,M,\varphi,\theta)$ be as in (\ref{note}). Define the map $\tilde\theta\colon I(N)/\smallspace\eqv \ \to I(M)/\smallspace\eqv$ as follows:
\begin{align*}
\tilde\theta ([e])=\left\{
\begin{array}{ll}
{[\theta (e)]},&\mbox{ if } e\neq 0,1 \mbox{ and }\Delta^+(e)\to \Delta^+\\
{[\theta (1-e)]},&\mbox{ if } e\neq 0,1 \mbox{ and }\Delta^+(e)\to \Delta^-\\
{[1]},&\mbox{ if } e =1\\
{[0]},&\mbox{ if } e =0
\end{array}\right.
\end{align*}
We show $\tilde\theta$ is well defined. Fix any equivalence class $\phi$ of idempotents generating the same right ideal.

Suppose $|\phi|=1$. By Lemma~\ref{lem.vonN.2}\eqref{vonN.6} and Lemma~\ref{lem2.4}\eqref{max.3}, $\phi=[1]$ or $\phi=[0]$, so there is nothing to prove (as $\phi$ has precisely one representative).

Suppose $|\phi|>1$. Select any $e,f\in \phi$.  We have $[e]=[f]$. Recall that (by definition) $[e]=[f]$ iff $\Delta^+(e)=\Delta^+(f)$. Suppose $\Delta^+(e)\to \Delta^+$. Lemma~\ref{lem3.3} ensures that $\Delta^+(\theta(e))=\Delta^+(\theta(f))$, so $[\theta(e)]=[\theta(f)]$. Consequently, 
\begin{align*}
\tilde\theta([e])= [\theta(e)]=[\theta(f)]=\tilde\theta([f])
\end{align*}
Suppose $\Delta^+(e)\to \Delta^-$. By Lemma~\ref{lem3.3}, $[\theta(1-e)]=[\theta(1-f)]$, hence $\tilde\theta([e])= [\theta(1-e)]=[\theta(1-f)]=\tilde\theta([f])$. So the map $\tilde\theta$ is well-defined.

We show $\tilde\theta$ is a bijection. Define the map $\tilde\psi\colon I(M)/\smallspace\eqv \ \to I(N)/\smallspace\eqv$ by
\begin{align*}
\tilde\psi ([f])=\left\{
\begin{array}{ll}
{[\theta^{-1} (f)]},&\mbox{ if } f\neq 0,1 \mbox{ and }\Delta^+(f)\to \Delta^+\\
{[\theta^{-1} (1-f)]},&\mbox{ if } f\neq 0,1 \mbox{ and }\Delta^+(f)\to \Delta^-\\
{[1]},&\mbox{ if } f =1\\
{[0]},&\mbox{ if } f =0
\end{array}\right.
\end{align*}
The map is well-defined by arguments analogues to those establishing well-definiteness of $\tilde\theta$. We show $\tilde\psi$ is the inverse for $\tilde\theta$. Fix a nontrivial idempotent $e\in N$ and set $f\mydef\theta(e)$.

Suppose $\Delta^+(e)\to \Delta^+$. Then $\Delta^+(f)\to \Delta^+$ (by Lemma~\ref{lem3.4}). By definition of $\tilde\theta$ and $\tilde\psi$, $\tilde\theta([e])= [\theta(e)]$ (so $\tilde\theta([e])=[f]$) and $\tilde\psi([f])= [\theta^{-1}(f)]$ (so $\tilde\psi([f])=[e]$). We conclude $$\tilde\psi\circ\tilde\theta([e])=\tilde\psi([f])=[e], \ \ \ \tilde\theta\circ\tilde\psi([f])=\tilde\theta([e])=[f].$$

Suppose $\Delta^+(e)\to \Delta^-$. Using Lemma~\ref{lem3.4}, $\Delta^+(1-f)\to \Delta^-$. We get that $\tilde\theta([e])= [\theta(1-e)]$ (so $\tilde\theta([e])=[1-f]$) and $\tilde\psi([1-f])= [\theta^{-1}(1-(1-f))]$ (so $\tilde\psi([1-f])=[e]$). Consequently $$\tilde\psi\circ\tilde\theta([e])=\tilde\psi([1-f])=[e].$$
Knowing that $\Delta^+(e)\to \Delta^-$, we also get $\Delta^+(1-e)\to \Delta^-$ and $\Delta^+(f)\to \Delta^-$ (by Lemma~\ref{lem3.4}). Hence we both have that $\tilde\theta([1-e])= [\theta(1-(1-e))]$ (so $\tilde\theta([1-e])=[f]$) and $\tilde\psi([f])= [\theta^{-1}(1-f)]$ (so $\tilde\psi([f])=[1-e]$). Therefore 
$$\tilde\theta\circ\tilde\psi([f])=\tilde\theta([1-e])=[f].$$
We conclude that $\tilde\theta$ has an inverse, hence it is a bijection.
\qed

\section*{Acknowledgements}
Part of this research was conducted while the authors were participating in the research program \emph{Classification of operator algebras: complexity, rigidity, and dynamics} at the Mittag-Leffler Institute, January--April 2016, and the Intensive Research Program \emph{Operator algebras: dynamics and interactions} at the Centre de Recerca Matem{\`{a}}tica, March--July 2017.  This research was supported by EIS Distinguished Visitors Program, by Australian Research Council grant DP150101598 and by a grant from NSERC Canada.

\providecommand{\bysame}{\leavevmode\hbox to3em{\hrulefill}\thinspace}
\providecommand{\MR}{\relax\ifhmode\unskip\space\fi MR }
\providecommand{\MRhref}[2]{%
  \href{http://www.ams.org/mathscinet-getitem?mr=#1}{#2}
}
\providecommand{\href}[2]{#2}

\end{document}